\definecolor{cblack}{rgb}{0,0,0}
\definecolor{cblue}{rgb}{0.121569,0.466667,0.705882}    
\definecolor{corange}{rgb}{1.000000,0.498039,0.054902}  
\definecolor{cgreen}{rgb}{0.172549,0.627451,0.172549}   
\definecolor{cred}{rgb}{0.839216,0.152941,0.156863}     
\definecolor{cpurple}{rgb}{0.580392,0.403922,0.741176}  
\definecolor{cbrown}{rgb}{0.549020,0.337255,0.294118}   
\definecolor{cpink}{rgb}{0.890196,0.466667,0.760784}
\definecolor{cgray}{rgb}{0.498039,0.498039,0.498039}
\definecolor{cgreen2}{rgb}{0.7372549019607844, 0.7411764705882353, 0.13333333333333333}
\newtheorem{theorem}{Theorem}[section]
\newtheorem{remark}[theorem]{Remark}
\newtheorem{lemma}[theorem]{Lemma}
\newtheorem{definition}[theorem]{Definition}
\newtheorem{example}[theorem]{Example}
\newtheorem{proposition}[theorem]{Proposition}
\newtheorem{corollary}[theorem]{Corollary}
\theoremstyle{plain} 
\newcommand{\thistheoremname}{}
\newtheorem*{genericthm}{\thistheoremname}
\newcommand{\what}{\widehat}
\def\moverlay{\mathpalette\mov@rlay}
\def\mov@rlay#1#2{\leavevmode\vtop{%
   \baselineskip\z@skip \lineskiplimit-\maxdimen
   \ialign{\hfil$\m@th#1##$\hfil\cr#2\crcr}}}
\newcommand{\charfusion}[3][\mathord]{
    #1{\ifx#1\mathop\vphantom{#2}\fi
        \mathpalette\mov@rlay{#2\cr#3}
      }
    \ifx#1\mathop\expandafter\displaylimits\fi}
\newcommand{\EE}{\mathbb{E}}
\newcommand{\RR}{\mathbb{R}}
\DeclareSymbolFont{bbold}{U}{bbold}{m}{n}
\DeclareSymbolFontAlphabet{\mathbbold}{bbold}
\newcommand{\One}{\mathbbold{1}}
\newcommand{\ba}{\bm a}
\newcommand{\bb}{\bm b}
\newcommand{\be}{\bm e}
\newcommand{\bv}{\bm v}
\newcommand{\bw}{\bm w}
\newcommand{\bx}{\bm x}
\newcommand{\by}{\bm y}
\newcommand{\bz}{\bm z}
\newcommand{\bA}{\bm A}
\newcommand{\bB}{\bm B}
\newcommand{\bM}{\bm M}
\newcommand{\bP}{\bm P}
\newcommand{\bV}{\bm V}
\newcommand{\bY}{\bm Y}
\newcommand{\bZ}{\bm Z}
\newcommand{\one}{\bm{1}}
\newcommand{\sH}{\mathcal{H}}
\newcommand{\sI}{\mathcal{I}}
\newcommand{\sM}{\mathcal{M}}
\DeclareSymbolFont{sfoperators}{OT1}{cmss}{m}{n}
\DeclareSymbolFontAlphabet{\mathsf}{sfoperators}
\renewcommand{\operator@font}{\mathgroup\symsfoperators}
\DeclareMathOperator{\Gram}{Gram}
\DeclareMathOperator{\sym}{sym}
\DeclareMathOperator{\Part}{Part}
\DeclareMathOperator{\Sym}{Sym}
\newcommand{\Ex}{\mathop{\mathbb{E}}}  
\newcommand{\vol}{\mathsf{vol}}
\newcommand{\tEE}{\widetilde{\mathbb{E}}}
\title{The spectrum of the Grigoriev-Laurent pseudomoments}
\date{March 10, 2022}
\author[1]{Dmitriy Kunisky\thanks{Email: \textit{dmitriy.kunisky@yale.edu}. Partially supported by ONR Award N00014-20-1-2335, a Simons Investigator Award to Daniel Spielman, and NSF grants DMS-1712730 and DMS-1719545. Part of this work was performed while DK was with New York University.}}
\author[2]{Cristopher Moore\thanks{Email: \textit{moore@santafe.edu}. Partially supported by NSF grant IIS-1838251.}}
\affil[1]{Department of Computer Science, Yale University}
\affil[2]{Santa Fe Institute}
\begin{document}

\maketitle
\thispagestyle{empty}

\begin{abstract}
    Grigoriev (2001) and Laurent (2003) independently showed that the sum-of-squares hierarchy of semidefinite programs does not exactly represent the hypercube $\{\pm 1\}^n$ until degree at least $n$ of the hierarchy.
    Laurent also observed that the pseudomoment matrices her proof constructs appear to have surprisingly simple and recursively structured spectra as $n$ increases.
    While several new proofs of the Grigoriev-Laurent lower bound have since appeared, Laurent's observations have remained unproved.
    We give yet another, representation-theoretic proof of the lower bound, which also yields exact formulae for the eigenvalues of the Grigoriev-Laurent pseudomoments.
    Using these, we prove and elaborate on Laurent's observations.

    Our arguments have two features that may be of independent interest.
    First, we show that the Grigoriev-Laurent pseudomoments are a special case of a Gram matrix construction of pseudomoments proposed by Bandeira and Kunisky (2020).
    Second, we find a new realization of the irreducible representations of the symmetric group corresponding to Young diagrams with two rows, as spaces of multivariate polynomials that are \emph{multiharmonic} with respect to an equilateral simplex.
\end{abstract}

\clearpage

\tableofcontents

\pagestyle{empty}

\clearpage

\setcounter{page}{1}
\pagestyle{plain}

\section{Introduction}

\subsection{Sum-of-Squares and the Grigoriev-Laurent Lower Bound}

The sum-of-squares (SOS) hierarchy is a powerful family of semidefinite programming (SDP) algorithms for computing bounds on polynomial optimization problems \cite{Shor-1987-SumOfSquares, Nesterov-2000-SOS, Lasserre-2001-GlobalOptimizationMoments, Parrilo-2003-SDPSemialgebraic}.
Because of the power of these algorithms for many theoretical problems (see, e.g., \cite{BS-2014-SOSQuest} for a survey and \cite{FKP-2019-SemialgebraicProofsAlgorithm} for a monograph treatment), proving \emph{lower bounds} showing that SOS programs do not give tight bounds on various problems has become an important direction in theoretical computer science \cite{Grigoriev-2001-SOSParity,Schoenebeck-2008-SOSCSP,MPW-2015-PlantedClique,KMOW-2017-SOSCSP,BHKKMP-2019-PlantedClique,GJJPR-2020-SK,PR-2020-MachineryPseudocalibration}.

In this paper, we revisit an early result on the SOS relaxation of optimizing a polynomial over the Boolean hypercube.
We first review the definition of this relaxation; our discussion follows as a special case of the general framework presented in, e.g., the survey \cite{Laurent-2009-SOS}.

We fix a few basic notations: we adopt the standard $[n] \colonequals \{1, \dots, n\}$, write $\binom{S}{k}$ and $\binom{S}{\leq k}$ for the sets of subsets of a set $S$ with exactly $k$ and at most $k$ elements, respectively, and, for a set $S \subseteq [n]$ and $\bx \in \RR^n$, write $\bx^S \colonequals \prod_{i \in S} x_i$.
We also write $\RR[x_1, \dots, x_n]_{\leq d}$ for the set of polynomials of degree at most $d$.
\begin{definition}[Hypercube pseudoexpectation]
    \label{def:pe}
    We say $\tEE: \RR[x_1, \dots, x_n]_{\leq 2d} \to \RR$ is a \emph{degree~$2d$ pseudoexpectation}\footnote{Usually we should specify a pseudoexpectation ``over $\{ \pm 1\}^n$'' or, more precisely yet, ``with respect to the constraints $x_i^2 - 1 = 0$,'' but we will only work over the hypercube in this paper so we omit these specifications.} if the following conditions hold:
    \begin{enumerate}
    \item $\tEE$ is linear,
    \item $\tEE[1] = 1$,
    \item $\tEE[(x_i^2 - 1) p(\bx)] = 0$ for all $i \in [n]$, $p \in \RR[x_1, \dots, x_n]_{\leq 2d - 2}$,
    \item $\tEE[p(\bx)^2] \geq 0$ for all $p \in \RR[x_1, \dots, x_n]_{\leq d}$.
    \end{enumerate}
\end{definition}
\noindent
For $\mu$ a probability measure over $\{\pm 1\}^n$, let $\EE_{\mu}$ denote the expectation operator with respect to $\mu$.
Then, any $\EE_{\mu}$ is a pseudoexpectation of any degree; however, some pseudoexpectations do not arise in this way.
The use of Definition~\ref{def:pe} in computation is that, so long as $\deg p \leq 2d$, using this observation we may bound a polynomial optimization problem by
\begin{equation}
    \max_{\bx \in \{\pm 1\}^n} p(\bx) = \max_{\substack{\mu \text{ a probability} \\ \text{measure over } \{\pm 1\}^n}} \EE_{\mu}[p(\bx)] \leq \max_{\substack{\tEE \text{ a degree } 2d \\ \text{pseudoexpectation}}} \tEE[p(\bx)],
\end{equation}
and the right-hand side may be computed in time $n^{O(d)}$ by solving a suitable SDP (this was taken for granted in earlier works like \cite{Lasserre-2001-GlobalOptimizationMoments, Parrilo-2003-SDPSemialgebraic, Laurent-2009-SOS}; more recently \cite{ODonnell-2017-SOSNotAutomatizable} noticed an important and previously neglected technicality, which is handled for our specific setting by \cite{RW-2017-BitComplexity}).

How effective are these relaxations?
The following result of Laurent \cite{Laurent-2003-CutPolytopeSOS}, one of the first strong lower bounds proved against the sum-of-squares hierarchy, shows that the relaxation is not \emph{tight}---achieving equality above for all polynomials $p$---until the very high degree $d \approx n$.
Essentially the same result is also latent in the slightly earlier work of Grigoriev \cite{Grigoriev-2001-SOSKnapsack}, but it will be more convenient for us to follow Laurent's discussion, phrased over the $\{\pm 1\}^n$ hypercube, than Grigoriev's, phrased over the $\{0, 1\}^n$ hypercube.
\begin{theorem}[Theorem 6 of \cite{Laurent-2003-CutPolytopeSOS}]
    \label{thm:laurent-pe}
    For any $n \geq 3$ odd, there exists $\tEE$ a degree $n - 1$ pseudoexpectation such that $\tEE \neq \EE_{\mu}$ for any probability measure $\mu$ over $\{\pm 1\}^n$.
\end{theorem}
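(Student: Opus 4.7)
The plan is to construct $\tEE$ as a symmetric pseudoexpectation and certify its positivity via a Gram matrix representation. First, by averaging over the natural $S_n$-action and over the sign-flip group $\{\pm 1\}^n$ (both of which preserve the constraints $x_i^2 - 1 = 0$), I would reduce to pseudoexpectations of the form $\tEE[\bx^S] = a_{|S|}$ with $a_k = 0$ for odd $k$. Using that $n$ is odd, so $n-1$ is even and $d = (n-1)/2$ is an integer, this reduces the problem to choosing the finite sequence $a_0 = 1, a_2, a_4, \ldots, a_{n-1}$. The constraints $\tEE[(x_i^2 - 1) p(\bx)] = 0$ are then automatic after reducing all polynomials modulo the ideal $(x_i^2 - 1)_{i \in [n]}$ to multilinear representatives.

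Next, I would realize $\tEE$ as a Gram matrix construction in the style of Bandeira--Kunisky: assign to each $S \in \binom{[n]}{\leq d}$ a vector $v_S$ in some Hilbert space so that $\tEE[\bx^S \bx^T]$ equals $\langle v_S, v_T \rangle$ after reduction. Positivity of the pseudomoment matrix $M$ with $M_{S,T} = a_{|S \triangle T|}$ is then automatic. To compute the spectrum of $M$ and thereby control the free parameters, I would exploit the $S_n$-equivariance of $M$: by Schur's lemma, the spectrum splits into isotypic blocks labeled by the irreducibles appearing in the permutation module $\RR^{\binom{[n]}{\leq d}}$, namely the two-row Specht modules $S^{(n-k,k)}$ for $0 \leq k \leq d$. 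Making this concrete requires an explicit model of these modules; I would construct one as a space of polynomials on $\RR^n$ that are harmonic, in a suitable sense, with respect to an equilateral simplex of $n+1$ points. This should give closed-form eigenvalues of $M$ as explicit linear functionals of $(a_{2j})$, which one can then arrange to be nonnegative.

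To rule out $\tEE = \EE_\mu$, I would use the symmetrization argument again: any such $\mu$ can be taken $S_n$- and sign-flip-invariant, hence a mixture of uniform measures on antipodal Hamming spheres. The moment sequences of these extremal measures are expressible in terms of Krawtchouk polynomials and span a specific finite-dimensional convex cone; I would choose $(a_{2j})$ to lie strictly outside this cone. Equivalently, one exhibits a polynomial of degree at most $n-1$ that is nonnegative on $\{\pm 1\}^n$ yet has $\tEE[p] < 0$.

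The main obstacle is the explicit diagonalization in the second step: without a hands-on realization of the two-row Specht modules $S^{(n-k,k)}$, the eigenvalues of $M$ are not accessible in a form that allows one to simultaneously guarantee PSDness and unrealizability. The multiharmonic-polynomial model of these modules is the key technical ingredient that should make both tasks tractable in closed form.
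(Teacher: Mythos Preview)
Your outline hits the right structural pieces---symmetric $\tEE$, $S_n$-equivariance and Schur's lemma, two-row Specht modules, and even the simplex-multiharmonic realization---but it misses the organizing idea that makes the paper's argument short. The paper does \emph{not} treat the $a_{2j}$ as free parameters to be tuned against two competing constraints. Instead it imposes the single condition $\tEE\big[(\sum_i x_i)\,p(\bx)\big]=0$ for all $p$ of degree $\leq n-2$, which forces the recursion $|S|\,a_{|S|-1}+(n-|S|)\,a_{|S|+1}=0$ and hence determines the entire sequence from $a_0=1$. With this choice, non-realizability is immediate: $\tEE\big[(\sum_i x_i)^2\big]=0$, while for $n$ odd $\sum_i x_i$ is an odd integer on $\{\pm 1\}^n$, so $\EE_\mu\big[(\sum_i x_i)^2\big]\geq 1$ for every $\mu$. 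Your Krawtchouk/moment-cone analysis is not needed.

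The same choice is what makes positivity clean. Because $\sum_i x_i$ is in the kernel of $\bY^{(n)}$, the decomposition $\RR[\{\pm 1\}^n]=\bigoplus_{d}\bigoplus_{k}(\sum_i x_i)^k W^{(n-d,d)}$ of \cite{BGP-2016-SOSHypercube} reduces the problem to a \emph{single} copy of each $W^{(n-d,d)}$ (the $k=0$ summand), and Schur's lemma reduces positivity to checking $\tEE[h_S(\bx)^2]>0$ for one isotypic projection per $d$. The paper carries this out by a direct character-sum and finite-difference computation (Proposition~\ref{prop:laurent-hS-norm}); the Gram/spectral-extension and multiharmonic model are invoked only afterwards, to extract the exact eigenvalues $\lambda_{n,d}$, not to establish $\bY^{(n)}\succeq 0$. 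So your plan inverts the logical order and makes the hard part (simultaneous PSDness and unrealizability for unspecified $a_{2j}$) harder than it needs to be. A small slip: the relevant simplex has $n$ vertices in $\RR^{n-1}$, not $n+1$.
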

\noindent
We note that this result is sharp, in the sense that for any $\tEE$ a degree $n + 1$ pseudoexpectation, there does exist $\mu$ such that $\tEE = \EE_{\mu}$, as conjectured by \cite{Laurent-2003-CutPolytopeSOS} and later proved by \cite{FSP-2016-SOSLifts}.

We next present in simple linear-algebraic terms the main technical claim underlying Laurent's proof.
\begin{definition}[Grigoriev-Laurent pseudomoments]
    Define the constants
    \begin{equation}
        \label{eq:as}
        a_k \colonequals \One\{k \text{ even}\} \cdot (-1)^{k / 2} \prod_{i = 0}^{k / 2 - 1}\frac{2i + 1}{n - 2i - 1}.
    \end{equation}
    For $n \geq 2$, set
    \begin{equation}
        d_{\max} = d_{\max}(n) \colonequals \left\lfloor \frac{n}{2}\right \rfloor.
    \end{equation}
    Define the matrix $\bY^{(n)} \in \RR^{\binom{[n]}{\leq d_{\max}} \times \binom{[n]}{\leq d_{\max}}}$ to have entries
    \begin{equation}
        Y^{(n)}_{S,T} \colonequals a_{|S \triangle T|},
    \end{equation}
    where $S \triangle T$ denotes the symmetric difference of sets.
\end{definition}

\begin{theorem}[Theorem 6 of \cite{Laurent-2003-CutPolytopeSOS}, rephrased]
    \label{thm:laurent-pm}
    For any $n \geq 2$, $\bY^{(n)} \succeq \bm 0$.
\end{theorem}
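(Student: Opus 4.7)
The plan is to exhibit $\bY^{(n)}$ as the Gram matrix of an explicit family of vectors $\{w_S\}_{|S|\leq d_{\max}}$, which immediately gives positive semidefiniteness. Guided by the abstract, the construction follows the Bandeira--Kunisky Gram-matrix template with the equilateral simplex as its geometric input.

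The starting observation is that $a_0 = 1$ and $a_2 = -1/(n-1)$ are the diagonal and off-diagonal inner products of the vertices of an equilateral simplex in $\RR^{n-1}$. Accordingly, I would choose unit vectors $y_1, \ldots, y_n \in \RR^{n-1}$ with $\langle y_i, y_j\rangle = -1/(n-1)$ for $i \neq j$ (equivalently $\sum_i y_i = 0$). Setting $w_{\{i\}} \colonequals y_i$ and $w_{\emptyset}$ to be an auxiliary unit vector orthogonal to $\mathrm{span}(y_1,\ldots,y_n)$ already realizes every entry $Y^{(n)}_{S,T}$ with $|S|, |T| \leq 1$.

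To extend this to larger subsets I would view the $y_i$ as linear forms and form the monomials $y_S \colonequals \prod_{i\in S} y_i$ in the symmetric algebra $\Sym(\RR^{n-1})$, then take $w_S$ to be the orthogonal projection of $y_S$ onto the subspace $\sH \subset \Sym(\RR^{n-1})$ of polynomials that are \emph{multiharmonic with respect to the simplex} (as previewed in the abstract), equipped with the apolar (Fischer) inner product. The simplex relation $\sum_i y_i = 0$, together with the multiharmonic condition, should force $\langle w_S, w_T\rangle$ to depend on $(S,T)$ only through $|S \triangle T|$.

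To compute the value, I would use $S_n$-equivariance: $\bY^{(n)}$ decomposes under the natural $S_n$-action on $\binom{[n]}{\leq d_{\max}}$ into isotypic blocks indexed by two-row Young diagrams $(n-j, j)$. The second ``independent-interest'' claim in the abstract---that degree-$k$ multiharmonic polynomials realize the Specht module $S^{(n-k,k)}$---identifies the $S_n$-structure of $w_S$ explicitly, so that $\langle w_S, w_T\rangle$ on each block reduces to a single scalar. These scalars can be pinned down inductively by the recurrence
\[
    a_k \;=\; -\frac{k-1}{n-k+1}\,a_{k-2} \qquad (k \geq 2),
\]
which is immediate from~\eqref{eq:as}, and which should emerge on the multiharmonic side from an apolarity identity that propagates $\sum_i y_i = 0$ through one Wick-type pairing in the Fischer inner product.

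The main obstacle I anticipate is the representation-theoretic setup itself: constructing $\sH$ concretely as an $S_n$-module with a clean description of its isotypic decomposition and of the projection $y_S \mapsto w_S$, and showing that the degree-$k$ piece is irreducible of type $S^{(n-k,k)}$. Once that machinery is in place the Gram realization gives positive semidefiniteness for free, and the $a_k$ formula is matched block-by-block.
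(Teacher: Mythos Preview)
Your Gram-vector construction has a genuine gap. If $w_S$ is the apolar projection of $y_S = \prod_{i \in S}\langle v_i, z\rangle$ onto the simplex-harmonic subspace, then $w_S$ is homogeneous of degree $|S|$, and under the apolar inner product homogeneous polynomials of different degrees are orthogonal by definition. Hence $\langle w_S, w_T\rangle_\circ = 0$ whenever $|S| \neq |T|$, whereas $Y^{(n)}_{S,T} = a_{|S \triangle T|}$ is nonzero whenever $|S|$ and $|T|$ merely have the same parity (for instance $Y^{(n)}_{\emptyset, \{i,j\}} = a_2 = -1/(n-1)$). Your ad hoc choice of $w_\emptyset$ already shows you sensed this issue for degrees $0$ and $1$, but the same obstruction recurs at every pair of degrees and cannot be repaired by a single auxiliary direction; your hoped-for ``Wick pairing'' identity cannot produce a nonzero inner product between harmonic polynomials of different degrees.

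The paper does eventually realize $\bY^{(n)}$ as a Gram matrix (Corollary~\ref{cor:gram-mx-laurent}), but the Gram vectors are \emph{not} the harmonic projections of $y_S$. One takes the full Fischer decomposition of $y_S$ into harmonic components $h_{S,d}$ of every degree $d \leq |S|$ with $d \equiv |S| \pmod 2$, and sets $p_S = \sum_d \sigma_d\, h_{S,d}$ with degree-dependent constants $\sigma_d$ that are not equal to~$1$ and must be computed separately (Lemma~\ref{lem:block-diag-laurent}). It is precisely these lower-degree components that couple index sets of different cardinalities.

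Moreover, the paper's actual proof of $\bY^{(n)}\succeq \bm 0$ (Section~\ref{sec:laurent-pf}) does not go through a Gram realization at all. It works directly on $\RR[\{\pm 1\}^n]$: by Proposition~\ref{prop:hypercube-rep} this module decomposes as $\bigoplus_{d,k} (\sum_i x_i)^k W^{(n-d,d)}$; since $\tEE$ annihilates the ideal of $\sum_i x_i$ and commutes with $S_n$, Schur's lemma reduces positivity to checking $\tEE[h_S(\bx)^2] > 0$ for the isotypic projection $h_S$ of a single monomial $\bx^S$ in each degree. That scalar is then computed explicitly via the character sum of Lemma~\ref{lem:restricted-char-sums} and a finite-difference argument (Proposition~\ref{prop:laurent-hS-norm}). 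The multiharmonic/spectral-extension machinery enters only afterward, to extract the eigenvalues rather than to establish positivity.
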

\noindent
Indeed, the pseudoexpectation in Theorem~\ref{thm:laurent-pe} has $\tEE[\bx^S] = a_{|S|}$ for all sets $S \subseteq [n]$ and extends to all polynomials by linearity and the property that $\tEE[x_i^2p(\bx)] = \tEE[p(\bx)]$.
The matrix $\bY^{(n)}$ is then called the \emph{pseudomoment matrix} of $\tEE$, and its entries the \emph{pseudomoments}.
With this choice, all properties in Definition~\ref{def:pe} may be readily verified except the last, which is equivalent to the positivity claimed in Theorem~\ref{thm:laurent-pm} (this equivalence is also why SOS optimization problems may be solved with SDPs).
Further, $\tEE \neq \EE_{\mu}$ for any probability measure $\mu$ when $n$ is odd, since a computation shows that $\tEE[(\sum x_i)^2] = 0$, while $\EE_{\mu}[(\sum x_i)^2] \geq 1$ for all $\mu$ because $\sum x_i$ is an odd integer.
We elaborate further on how this argument motivates the choice of the pseudomoments of degree greater than two in Section~\ref{sec:prelim-pseudomoments}.

The matrix $\bY^{(n)}$ decomposes as the direct sum of two principal submatrices, those indexed by $\binom{[n]}{d}$ with $d$ even and odd respectively, which Laurent considers separately, but it will be more natural in our calculations to avoid this decomposition.\footnote{In fact, without loss of generality we could from the outset restrict our attention to pseudomoment matrices that factorize in this way, since for any pseudoexpectation $\tEE$ we may work equally well with $\tEE^{\prime}[p(\bx)] \colonequals \frac{1}{2}(\tEE[p(\bx)] + \tEE[p(-\bx)])$, whose pseudomoments of odd degree are zero. This reduction corresponds to the invariance of the hypercube constraints and the quadratic form $(\sum x_i)^2$ under the mapping $\bx \mapsto -\bx$.}
Adjusting for this minor change, Laurent's proof may be seen as identifying a $\binom{n}{\leq d_{\max} - 1}$-dimensional kernel of $\bY^{(n)}$, and proving that the principal submatrix $\bZ^{(n)}$ of $\bY^{(n)}$ indexed by $\binom{[n - 1]}{d_{\max}} \cup \binom{[n - 1]}{d_{\max} - 1}$, which has total dimension $\binom{n}{d_{\max}}$, is strictly positive definite.
That $\bY^{(n)} \succeq \bm 0$ then follows by interlacing of eigenvalues.
While identifying the kernel is straightforward, for the second part of the argument Laurent uses that each block of $\bZ^{(n)}$ belongs to the Johnson association scheme, and applies formulae for the eigenvalues of the matrices spanning the Johnson scheme's Bose-Mesner algebra.
Concretely, this expresses the eigenvalues of $\bZ^{(n)}$ as combinatorial sums involving binomial coefficients and having alternating signs.
To establish positivity, Laurent then uses general identities for transforming hypergeometric series \cite{PWZ-1996-AB}, which yield different expressions for the eigenvalues of $\bZ^{(n)}$ as sums of only positive terms.

Laurent also makes several further empirical observations about $\bY^{(n)}$ and its ``quite remarkable structural properties'' in the appendix of \cite{Laurent-2003-CutPolytopeSOS}, which we restate in Theorem~\ref{thm:laurent-evals} below.
Most notably, as $n$ increases, the spectrum of $\bY^{(n)}$ appears to ``grow'' in a simple recursive fashion, with the eigenvalues of $\bY^{(n + 2)}$ equaling those of $\bY^{(n)}$ multiplied by $\frac{n + 2}{n + 1}$, along with a new largest eigenvalue.
Unfortunately, the proof outlined above does not make use of this elegant structure and does not give any indication of why it should hold.
These intriguing observations have remained unproved to date.

\subsection{Main Result and Proof Ideas}

Our main theorem proves Laurent's observations, along with some further details of the recursion she proposed (namely, an explicit formula for the base case---the ``new'' largest eigenvalue alluded to above---and formulae for the eigenvalue multiplicities).

\begin{theorem}
    \label{thm:laurent-evals}
    $\bY^{(n)}$ has $d_{\max} + 2$ distinct eigenvalues, $0 < \lambda_{n, d_{\max}} < \cdots < \lambda_{n, 1} < \lambda_{n, 0}$.
    The multiplicity of the zero eigenvalue is $\binom{n}{\leq d_{\max} - 1}$, while the $\lambda_{n, d}$ have the following multiplicities and recursive description:
    \begin{align*}
        \lambda_{n, 0} &= \sum_{k = 0}^{d_{\max}} \binom{n}{k}a_k^2 \text{ with multiplicity } 1, \\
        \lambda_{n, d} &= \frac{n}{n - 1} \lambda_{n - 2, d - 1} \text{ for } 1 \leq d \leq (n - 1) / 2 \text{ with multiplicity } \binom{n}{d} - \binom{n}{d - 1}.
    \end{align*}
    The following closed form also holds for $0 \leq d \leq d_{\max}$:
    \[ \lambda_{n, d} = n! \sum_{k = d}^{d_{\max}} \frac{a_{k - d}^2}{(n - d - k)!(k - d)!} \prod_{i = 0}^{d - 1} \frac{1}{(n - 2i - 1 - k + d)^2}. \]
\end{theorem}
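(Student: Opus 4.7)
The plan is to exploit the $S_n$-symmetry of $\bY^{(n)}$ to block-diagonalize it representation-theoretically, and then to exhibit a Gram factorization that forces each block to be rank one. Because $Y^{(n)}_{S,T}$ depends only on $|S \triangle T|$, the matrix $\bY^{(n)}$ commutes with the coordinate permutation action of $S_n$ on $\RR^{\binom{[n]}{\le d_{\max}}}$. The classical Young decomposition gives $\RR^{\binom{[n]}{k}} \iso \bigoplus_{j=0}^{k} S^{(n-j,j)}$ for $k \le d_{\max}$, so in the ambient space each irreducible $S^{(n-j,j)}$ appears with multiplicity $m_j \colonequals d_{\max} - j + 1$. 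By Schur's lemma, $\bY^{(n)}$ acts on the $j$-th isotypic component as $M_j \otimes \Id_{S^{(n-j,j)}}$ for some symmetric $m_j \times m_j$ matrix $M_j$, and the eigenvalues of $\bY^{(n)}$ are precisely those of the $M_j$, each with multiplicity $\dim S^{(n-j,j)} = \binom{n}{j} - \binom{n}{j-1}$.

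The crux is showing that each $M_j$ has rank exactly one. I would achieve this by realizing $\bY^{(n)} = \Phi^* \Phi$ for an $S_n$-equivariant linear map $\Phi : \RR^{\binom{[n]}{\le d_{\max}}} \to H$, where $H \colonequals \bigoplus_{j=0}^{d_{\max}} H_j$ and each $H_j$ is a single copy of $S^{(n-j,j)}$ realized as a space of multiharmonic polynomials on the equilateral simplex, following the Bandeira-Kunisky Gram pseudomoment framework promised in the abstract. A natural candidate for $H_j$ is the kernel of the lowering operator $\delta : \RR^{\binom{[n]}{j}} \to \RR^{\binom{[n]}{j-1}}$ defined by $\delta e_S \colonequals \sum_{i \in S} e_{S \setminus i}$, which corresponds to $\sum_{i=1}^n \partial_i$ under the identification $e_S \leftrightarrow \bx^S$ and is classically isomorphic to $S^{(n-j,j)}$; the novel input is the accompanying second-order condition---harmonicity with respect to a Laplacian adapted to the sphere $\{\sum x_i = 0,\ \|\bx\|^2 = n\}$---which must be chosen so that $\langle \Phi(e_S), \Phi(e_T) \rangle_H = a_{|S \triangle T|}$. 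Given such a factorization, the $S_n$-equivariance of $\Phi$ combined with the multiplicity-one appearance of $S^{(n-j,j)}$ in $H$ forces $\rank M_j \le 1$, delivering the stated multiplicities; the kernel dimension $\binom{n}{\le d_{\max} - 1}$ then follows from the telescoping identity $\sum_j (m_j - 1)\dim S^{(n-j,j)} = \binom{n}{\le d_{\max}} - \binom{n}{d_{\max}}$.

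With rank-one established, the nonzero eigenvalue $\lambda_{n,d}$ of $M_d$ can be extracted from a single Rayleigh quotient in the $d$-th isotypic. For $d = 0$ the invariant subspace is spanned by the vectors $\sum_{|S| = k} e_S$ for $k = 0, \ldots, d_{\max}$, and a direct computation using the combinatorial identity $\sum_{|S| = k} a_{|S \triangle T|} = \binom{n}{k} a_k a_{|T|}$ shows $M_0 = v v^\top$ in an appropriate basis, yielding $\lambda_{n,0} = \sum_{k=0}^{d_{\max}} \binom{n}{k} a_k^2$. For $d \ge 1$, I would prove the recursion $\lambda_{n,d} = \tfrac{n}{n-1} \lambda_{n-2, d-1}$ by exploiting the pointwise relation $a_k^{(n)} = \tfrac{n-k-1}{n-1} a_k^{(n-2)}$ together with a natural intertwiner between the $(d-1)$-th isotypic of $\bY^{(n-2)}$ and the $d$-th isotypic of $\bY^{(n)}$ induced by the embedding $[n-2] \hookrightarrow [n]$. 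Iterating the recursion $d$ times down to $\lambda_{n-2d, 0}$, inserting the trivial-representation formula, and reindexing yields precisely the stated closed form (the product $\prod_{i=0}^{d-1}(n-2i)(n-2i-1) = n!/(n-2d)!$ combines with the binomial and pseudomoment factors to give the displayed expression).

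The principal obstacle is expected to be the explicit construction of the multiharmonic Gram factorization and the verification that its inner product reproduces $a_{|S \triangle T|}$. Since the multiharmonic realization of $S^{(n-j,j)}$ is genuinely new, the technical work is to pin down the correct polynomial model, fix the normalization of $\Phi$, and evaluate polynomial inner products against the natural rotation-invariant measure on the ambient sphere---moments that will need to be reconciled with the unusual property $\tEE[x_i^2 p] = \tEE[p]$ of the pseudoexpectation. A secondary challenge is isolating the exact scalar $\tfrac{n}{n-1}$ in the recursion, which will require careful bookkeeping of normalizations when passing between $n$ and $n-2$; the cleanest route may be to compute $\lambda_{n, d}$ and $\lambda_{n-2, d-1}$ separately from the Gram data and verify the ratio, rather than to construct an abstract isometric intertwiner.
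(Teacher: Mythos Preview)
Your skeleton---block-diagonalize by Schur's lemma, then force each $M_d$ to have rank one via an equivariant Gram factorization $\bY^{(n)}=\Phi^*\Phi$ into a space carrying each $S^{(n-d,d)}$ with multiplicity one---is exactly the paper's. The paper realizes $H$ concretely as the \emph{simplex-harmonic} polynomials $V_{\sH}^{(d)}\subset\RR[z_1,\dots,z_{n-1}]_d^{\hom}$ under the \emph{apolar} inner product, not an $L^2$ inner product on a sphere as you suggest; the apolar adjointness of multiplication and differentiation is what makes the verification $\langle\Phi(e_S),\Phi(e_T)\rangle=a_{|S\triangle T|}$ tractable, so your model would need to be replaced or sharpened here.

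Where you genuinely diverge is in extracting the eigenvalues, and your route is arguably cleaner. The paper computes the closed form \emph{first}---via an explicit evaluation of frame constants $\|h_{S,d}\|_\circ^2$ that rests on a technical lemma summing $\chi_{(n-d,d)}$ over $\{\pi:|\pi(A)\cap B|=k\}$---and only afterward checks the recursion by direct manipulation of that formula. Your order (base case, then recursion, then unroll) would bypass all of the character-sum machinery. Your identity $\sum_{|S|=k}a_{|S\triangle T|}=\binom{n}{k}a_k a_{|T|}$ is correct and has a one-line proof: on $\{\pm1\}^n$ each elementary symmetric polynomial $e_k$ is a polynomial in $s=\sum_i x_i$, so $\tEE[e_k\,\bx^T]=(e_k|_{s=0})\,a_{|T|}$, while $e_k|_{s=0}=\tEE[e_k]=\binom{n}{k}a_k$. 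Unrolling the recursion down to $\lambda_{n-2d,0}$ and using $a_j^{(n-2d)}=a_j^{(n)}\prod_{i=0}^{d-1}\tfrac{n-2i-1}{n-2i-1-j}$ together with $\prod_{i=0}^{d-1}(n-2i)(n-2i-1)=n!/(n-2d)!$ does reproduce the stated closed form.

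The recursion step is where your proposal is underspecified. The phrase ``intertwiner induced by $[n-2]\hookrightarrow[n]$'' is ambiguous, and the naive inclusion $e_S\mapsto e_S$ does not work. What does work is $\iota(e_S)\colonequals e_{S\cup\{n-1\}}-e_{S\cup\{n\}}$ (polynomially, multiplication by $x_{n-1}-x_n$): from $a_{k+2}^{(n)}=-\tfrac{k+1}{n-k-1}a_k^{(n)}$ one gets $a_k^{(n)}-a_{k+2}^{(n)}=\tfrac{n}{n-1}a_k^{(n-2)}$, and a short case check then yields the exact relation $\bY^{(n)}\iota=\tfrac{n}{n-1}\,\iota\,\bY^{(n-2)}$. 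To conclude $\lambda_{n,d}=\tfrac{n}{n-1}\lambda_{n-2,d-1}$ you still need that $\iota$ sends the nonzero eigenspace at level $d-1$ into the one at level $d$; this holds because those eigenspaces are precisely the Specht polynomial modules $W^{(n-1-d,d-1)}$ and $W^{(n-d,d)}$ (the kernel in each isotypic being the part divisible by $s$), and multiplication by $x_{n-1}-x_n$ visibly sends Specht generators to Specht generators. So your plan can be completed, but it needs this specific $\iota$ and the identification of the eigenspace with $W^{(n-d,d)}$, not merely abstract rank one.
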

\noindent
We will reach this result in two stages: first, we will give a new proof of Theorem~\ref{thm:laurent-pm}, based on some previous observations of \cite{BGP-2016-SOSHypercube}.
This proof will be a representation-theoretic one, and will give an abstract description of the eigenspaces and eigenvalues of $\bY^{(n)}$.
This description will suffice to prove positivity, but not to compute the eigenvalues as explicitly as we wish to.

To complete that computation, we will show moreover that $\bY^{(n)}$ and the associated pseudoexpectation $\tEE$ arise from a particular case of the \emph{spectral extension} construction proposed by \cite{KB-2020-Degree4SK, Kunisky-2020-PositivityPreservingExtensions} which describes $\bY^{(n)}$ as a Gram matrix of certain polynomials under the \emph{apolar inner product}.
This observation will allow us to rephrase the computation of the eigenvalues $\bY^{(n)}$ in terms of this Hilbert space of polynomials, which we can carry out in closed form.
The basic idea is that, after writing $\bY^{(n)} = \bA^{\top}\bA$ using spectral extension, we will use that $\bY^{(n)}$ has the same non-zero spectrum as $\bA\bA^{\top}$.
It turns out that basic representation-theoretic reasoning about symmetries of the ``Gram vectors'' that are the columns of $\bA$ implies that the latter is actually a diagonal matrix.
To identify the spectrum of $\bY^{(n)}$, it then remains only to extract the diagonal entries and count their multiplicities, which is a non-trivial but tractable combinatorial calculation.

We note also that Laurent observed that it appears plausible to prove $\bY^{(n)} \succeq \bm 0$ by repeatedly taking Schur complements with respect to blocks indexed by subsets of fixed size $\binom{[n]}{d}$, in the order $d = 0, 1, \dots, d_{\max}$.
This would give a perhaps more conceptually-satisfying proof than the original one relying on eigenvalue interlacing---which offers no direct insight into the spectrum of $\bY^{(n)}$ itself---but appears quite technical to conduct.
Our approach will, in showing that $\bY^{(n)}$ is a special case of spectral extension, implicitly carry out this plan, as we discuss later in Section~\ref{sec:schur-complement}.

\subsection{Related Work}

At least three other, conceptually different, proofs of the Grigoriev-Laurent lower bound (that we know of) have since appeared.
First, \cite{KLM-2016-SymmetricFormulations} showed that, for highly symmetric problems over subsets of the hypercube, the positivity of the ``natural'' pseudoexpectation constructed from symmetry considerations reduces to a small number of univariate polynomial inequalities.
In the case of Laurent's result, these inequalities simplify algebraically and yield a proof, while in other cases this machinery calls for analytic arguments.

Second, \cite{BGP-2016-SOSHypercube} produced an elegant proof of a stronger result, showing that the function $(\sum_{i = 1}^n x_i)^2 - 1$ is not even a sum of squares of \emph{rational} functions of degree at most $d_{\max}$.
Their proof works in the dual setting, describing the decomposition of the space of functions on the hypercube into irreducible representations of the symmetric group and considering how a hypothetical sum of squares expression decomposes into associated components in these subspaces.
We will present this decomposition below and apply it at the beginning of our computations.

Third, \cite{Potechin-2017-GoodStory} showed that the result, in Grigoriev's form over the $\{0, 1\}^n$ hypercube, follows from another general representation-theoretic reduction of positivity conditions to a lower-dimensional space of polynomials.

Finally, we mention that many of our results appeared previously in preliminary form in the first author's dissertation \cite{Kunisky-2021-Thesis}.

\subsection{Notation}

We write $a \wedge b$ and $a \vee b$ for the minimum and maximum, respectively, of $a, b \in \RR$.
We write $\sM_k(S)$ for the set of multisets of size $k$ with elements belonging to $S$, and, as for sets, for $S \in \sM_k([n])$ and $\bx \in \RR^n$, we write $\bx^S \colonequals \prod_{i \in S} x_i$.
We write $\RR[x_1, \dots, x_n]^{\hom}_d$ for the polynomials in the given indeterminates that are homogeneous of degree $d$, and write $[x^k]p(x)$ for the coefficient of $x^k$ in $p$.
For a collection of vectors $\ba_1, \dots, \ba_m$ of equal dimension, we write $\Gram(\ba_1, \dots, \ba_m) \in \RR^{m \times m}$ for the Gram matrix:
    \begin{equation}
        \Gram(\ba_1, \dots, \ba_m)_{ij} \colonequals \langle \ba_i, \ba_j \rangle.
    \end{equation}

\section{Preliminaries}

\subsection{More on the Grigoriev-Laurent Pseudomoments}
\label{sec:prelim-pseudomoments}

We make a few further remarks on the pseudoexpectation $\tEE$ described above, which will play an important role later and also give some motivation for this construction.
We draw attention to the degree 2 pseudomoment matrix,
\begin{equation}
    \tEE[\bx\bx^{\top}] = \left[\begin{array}{cccc}
    1 & -\frac{1}{n - 1} & \cdots & -\frac{1}{n - 1} \\
    -\frac{1}{n - 1} & 1 & \cdots & -\frac{1}{n - 1} \\
    \vdots & \vdots & \ddots & \vdots \\
    -\frac{1}{n - 1} & -\frac{1}{n - 1} & \cdots & 1
    \end{array}\right] = \frac{n}{n - 1} \bm I_n - \frac{1}{n - 1} \one_n\one_n^{\top} \in \RR^{n \times n}_{\sym}.
\end{equation}
This is the Gram matrix of any $n$ unit vectors in $\RR^{n - 1}$ pointing to the vertices of an equilateral simplex with barycenter at the origin.
This choice of $\tEE[\bx\bx^{\top}]$ ensures that $\tEE[(\sum x_i)^2] = \one^{\top}\tEE[\bx\bx^{\top}] \one = 0$, which, as we saw above, is how we may verify that $\tEE \neq \EE_{\mu}$ for any probability measure $\mu$ when $n$ is odd.

Given this, it is not difficult to arrive at the values of the other $a_{k}$ defining the pseudomoments: we assume by symmetry that $\tEE[\bx^S]$ depends only on $|S|$; by symmetrizing $\tEE^{\prime}[p(\bx)] \colonequals \frac{1}{2}(\tEE[p(\bx)] + \tEE[p(-\bx)])$ we may assume that $\tEE[\bx^S] = 0$ whenever $|S|$ is odd; and we assume that not only does $\tEE[(\sum_{i = 1}^n x_i)^2] = 0$, but moreover that $\tEE[(\sum_{i = 1}^n x_i)p(\bx)] = 0$ whenever $\deg(p) \leq n - 2$ (sometimes called $\tEE$'s ``strongly satisfying'' the constraint $\sum_{i = 1}^n x_i = 0$).\footnote{Actually, one may verify that this property \emph{must} be satisfied by any $\tEE$ for which $\tEE[(\sum_{i = 1}^n x_i)^2] = 0$, using the ``SOS Cauchy-Schwarz inequality'': if $\tEE$ has degree $2d$ and $p, q \in \RR[x_1, \dots, x_n]_{\leq d}$, then $|\tEE[p(\bx) q(\bx)]| \leq (\tEE[p(\bx)^2])^{1/2} (\tEE[q(\bx)^2])^{1/2}$.}
Then, we must have $0 = \tEE[(\sum_{i = 1}^n x_i)\bx^S] = |S| a_{|S| - 1} + (n - |S|) a_{|S| + 1}$, and starting with $\tEE[1] = 1$ the values in \eqref{eq:as} follow recursively.
We also note that it is impossible to continue this construction past $|S| \leq (n - 1) / 2$ while retaining these properties, as solving the recursion calls for a division by zero.

An alternative way to motivate the construction of $\tEE$ is to note that, if $n$ is even and $\mu$ is the uniform measure over those $\bx \in \{\pm 1\}^n$ satisfying $\sum_{i = 1}^n x_i = 0$, then we have by straightforward counting arguments and manipulations of binomial coefficients
\begin{equation}
    \Ex_{\bx \sim \mu}[\bx^S] = \One\{|S| \text{ even}\} \frac{1}{\binom{n}{n / 2}} \sum_{k = 0}^{|S|} \binom{|S|}{k} \binom{n - |S|}{\frac{n}{2} - k} (-1)^k = \One\{|S| \text{ even}\}(-1)^{|S| / 2} \frac{\binom{n / 2}{|S| / 2}}{\binom{n}{|S|}}.
\end{equation}
Then, we recover our choice of $\tEE[\bx^S]$ if, even when $n$ is odd, we interpret the numerator above formally as
\begin{equation}
    \binom{n / 2}{m} \colonequals \frac{\frac{n}{2} \cdot (\frac{n}{2} - 1) \cdots (\frac{n}{2} - m + 1)}{m!} = \frac{n!!}{2^m m! (n - 2m)!!}.
\end{equation}
In this regard, the Grigoriev-Laurent lower bound shows that the SOS proof system requires high degree to distinguish between genuine binomial coefficients describing a counting procedure and their formal extension to rational inputs.\footnote{We thank Robert Kleinberg for teaching us this interpretation of the Grigoriev-Laurent lower bound.}

\subsection{Spectral Extensions of Pseudomoments}

Let us fix $\bv_1, \dots, \bv_n \in \RR^{n - 1}$ unit vectors pointing to the vertices of an equilateral simplex with $\sum \bv_i = \bm 0$.
As we have seen above, $\tEE[\bx\bx^{\top}]$ is then the Gram matrix of the $\bv_i$.

\begin{remark}
    We may produce a concrete realization of such simplex vectors by starting with $\be_1, \dots, \be_n \in \RR^n$, projecting these vectors to the orthogonal complement of $\one \in \RR^n$, and then expressing the resulting vectors in an orthonormal basis of $n - 1$ vectors for this orthogonal complement.
    For example, they may be realized as linear combinations of the standard basis $\be_1, \dots, \be_{n - 1}$ and the all-ones vector $\one$ in $\RR^{n - 1}$; we may take
    \begin{equation}
        \bv_i = \frac{1}{\sqrt{n - 1}}\left(\sqrt{n}\be_i - \frac{\sqrt{n} + 1}{n - 1}\one\right) \text{ for } i \in [n - 1] \text{ and } \bv_n = \frac{1}{\sqrt{n - 1}}\one.
    \end{equation}
\end{remark}

In such a setting, \cite{KB-2020-Degree4SK, Kunisky-2020-PositivityPreservingExtensions} proposed the technique of \emph{spectral extension} for determining the values of the higher-degree values of a pseudoexpectation $\tEE$.
Spectral extension proposes building $\tEE$ in a ``Gramian'' fashion, having $\tEE[\bx^S \bx^T] = \langle v_S, v_T \rangle$ for suitable vectors $v_S$.
Actually, these vectors may be interpreted as polynomials endowed with a particular inner product, which we review below. We follow the presentation of \cite{Reznick-1996-HomogeneousPolynomial}; see also the unpublished note \cite{Gichev-XXXX-HarmonicComponentHomogeneous}.

The inner product we use is defined as follows.
\begin{definition}[Apolar inner product]
For $p, q \in \RR[z_1, \dots, z_k]_{\hom}^d$, we define
\begin{equation}
    \langle p, q \rangle_{\circ} \colonequals \frac{1}{d!} p(\partial_1, \dots, \partial_k) q,
\end{equation}
where $p(\partial_1, \dots, \partial_k)$ denotes the formal differential operator produced by substituting coordinate differentials into $p$.\footnote{For example, if $p(x_1, x_2) = x_1x_2^2 - x_2^3$, then $p(\partial_1, \partial_2) q(x_1, x_2) = \frac{\partial^3 q}{\partial x_1 \partial x_2^2} - \frac{\partial^3 q}{\partial x_2^3}$.}
As an abbreviation we write $p(\bm\partial) \colonequals p(\partial_1, \dots, \partial_k)$.
For $p, q$ homogeneous of different degrees we also set $\langle p, q \rangle_{\circ} \colonequals 0$, and extend by linearity to define $\langle p, q \rangle_{\circ}$ for arbitrary $p, q \in \RR[z_1, \dots, z_k]$.
\end{definition}

One may verify that this is merely an inner product on the vectors of coefficients of $p$ and $q$ reweighted in a particular way; however, this particular reweighting has the following special property.
\begin{proposition}[Theorem 2.11 of \cite{Reznick-1996-HomogeneousPolynomial}]
    \label{prop:apolar-adjointness}
    Suppose $p, q, r \in \RR[z_1, \dots, z_k]^{\hom}$, with degrees $\deg(p) = a, \deg(q) = b$, and $\deg(r) = a + b$.
    Then,
    \begin{equation}
        \langle pq, r \rangle_{\circ} = \frac{a!}{(a + b)!}\langle p, q(\bm\partial)r \rangle_{\circ}.
    \end{equation}
\end{proposition}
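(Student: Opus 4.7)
The plan is to unfold both sides of the identity directly from the definition of the apolar inner product and reduce everything to the algebraic fact that substituting coordinate differentials into a product of polynomials equals the composition of the two corresponding differential operators, i.e.\ $(pq)(\bm\partial) = p(\bm\partial) \circ q(\bm\partial)$.

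First I would note that $pq$ and $r$ are both homogeneous of degree $a + b$, so the definition immediately gives
\[ \langle pq, r \rangle_{\circ} = \frac{1}{(a+b)!}\, (pq)(\bm\partial)\, r. \]
The only substantive observation needed is that $(pq)(\bm\partial) = p(\bm\partial)\, q(\bm\partial)$ as operators on $\RR[z_1,\dots,z_k]$. This holds on monomials because mixed partial derivatives commute, so substituting $\bm\partial$ into a product of monomials produces the composition of the two individual operators regardless of the order of substitution; it then extends to arbitrary $p,q$ by bilinearity. Applied to $r$, this step rewrites the left-hand side as $\frac{1}{(a+b)!}\, p(\bm\partial)\bigl(q(\bm\partial)\, r\bigr)$.

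Second, I would observe that every monomial appearing in $q(\bm\partial)$ has total differential order $b$, so $q(\bm\partial)\, r$ is homogeneous of degree $(a+b)-b=a$, matching $\deg p$. Therefore the definition of $\langle\cdot,\cdot\rangle_\circ$ applies and yields
\[ \langle p,\, q(\bm\partial)\, r \rangle_{\circ} = \frac{1}{a!}\, p(\bm\partial)\bigl(q(\bm\partial)\, r\bigr). \]
Multiplying by $a!/(a+b)!$ produces exactly the expression we obtained for the left-hand side, closing the identity.

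I do not anticipate any real obstacle: the proof is essentially a degree-tracking exercise whose only content is the commutativity of partial derivatives combined with careful bookkeeping of the factorial prefactors used to normalize the apolar inner product. The one place to be slightly careful is the initial reduction $(pq)(\bm\partial) = p(\bm\partial)\, q(\bm\partial)$, which should be verified on monomials before invoking bilinearity.
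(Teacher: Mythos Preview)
Your argument is correct. The paper does not actually supply its own proof of this proposition; it simply quotes the result as Theorem~2.11 of \cite{Reznick-1996-HomogeneousPolynomial}. Your direct verification from the definition---using $(pq)(\bm\partial)=p(\bm\partial)\,q(\bm\partial)$ via commutativity of partial derivatives together with the degree bookkeeping that $q(\bm\partial)r$ is homogeneous of degree $a$---is exactly the standard short proof and there is nothing to add.
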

\noindent
That is, this inner product makes multiplication and differentiation into adjoint operations.

The second idea behind spectral extension is the linear subspace of \emph{multiharmonic polynomials},
\begin{equation}
    V_{\sH}^{(d)} \colonequals \left\{ q \in \RR[z_1, \dots, z_{n - 1}]_d^{\hom}: \langle \bv_i, \bm\partial \rangle^2 q = 0 \text{ for all } i \in [n]\right\}.
\end{equation}
We call $V_{\sH}^{(d)}$ in our case the space of \emph{simplex-harmonic} polynomials.
By Proposition~\ref{prop:apolar-adjointness}, $V_{\sH}^{(d)}$ is precisely the orthogonal complement in $\RR[z_1, \dots, z_{n - 1}]^{\hom}_d$ (endowed with the apolar inner product) of the ideal generated by the polynomials $\langle \bv_i, \bm z \rangle^2$.
Thus any polynomial of $\RR[z_1, \dots, z_{n - 1}]^{\hom}_d$ decomposes into an \emph{ideal component} and a \emph{harmonic component} according to this orthogonality.
Moreover, by repeating this decomposition, we find that, for any $p \in \RR[z_1, \dots, z_{n - 1}]^{\hom}_d$, there exist $q_{T} \in V_{\sH}^{(d - 2|T|)}$ such that
\begin{equation}
    p(\bz) = \sum_{k = 0}^{\lfloor d / 2 \rfloor} \sum_{T \in \sM_k([n])} q_T(\bz) \prod_{i \in T} \langle \bv_i, \bz \rangle^2.
\end{equation}
This is sometimes called a \emph{Fischer decomposition}, and is an algebraic generalization of the following two more familiar examples where $\langle \bv_1, \bz \rangle^2, \dots, \langle \bv_n, \bz \rangle^2$ are replaced by other sequences of polynomials $f_1(\bz), \dots, f_k(\bz)$.
To lighten the notation, we write these over $m$ indeterminates $\by = (y_1, \dots, y_m)$ instead; in our case above we have $m = n - 1$.

\begin{example}[Multilinear polynomials]
    If we take $f_j(\by) = z_j^2$ for $j \in [m]$, then the associated multiharmonic subspace of $q \in \RR[y_1, \dots, y_m]^{\hom}_d$ consists of those $q$ for which $\frac{\partial^2 q}{\partial y_j^2} = 0$ for each $j$, which simply constrains $q$ to be a homogeneous multilinear polynomial.
    Indeed, the same holds for $f_j(\by) = \langle \bw_j, \by \rangle^2$ for any orthonormal basis $\bw_1, \dots, \bw_{m}$; all of the interesting behavior of the Grigoriev-Laurent pseudomoments arises precisely because of the slight overcompleteness of the simplex vectors $\bv_1, \dots, \bv_n$ as compared to an orthonormal basis.
\end{example}

\begin{example}[Spherical harmonics]
    If we consider just one polynomial $f_1(\by) = \sum_{i = 1}^{m}y_i^2$, then the associated multiharmonic subspace consists of those $q$ for which $\sum_{i = 1}^{m} \frac{\partial^2 q}{\partial y_i^2} = \Delta q = 0$, where $\Delta$ is the Laplacian operator.
    These are harmonic homogeneous polynomials, which, when restricted to the sphere, are the \emph{spherical harmonics} of classical analysis.
    The Fischer decomposition is then the decomposition of a polynomial restricted to the sphere into spherical harmonics of different degrees.
\end{example}

To describe the spectral extension prediction, we consider this decomposition applied to the polynomials $\prod_{i \in S} \langle \bv_i, \bz \rangle$ for $S \subseteq [n]$.
Let $h_{S, T} \in \RR[x_1, \dots, x_n]$ be polynomials such that $h_{S, T}(\langle \bv_1, \bz \rangle, \dots, \langle \bv_n, \bz \rangle) \in V_{\sH}^{|S| - 2|T|}$ and
\begin{equation}
    \prod_{i \in S} \langle \bv_i, \bz \rangle = \sum_{k = 0}^{\lfloor d / 2 \rfloor} \sum_{T \in \sM_k([n])} h_{S, T}(\langle \bv_1, \bz \rangle, \dots, \langle \bv_n, \bz \rangle) \prod_{i \in T} \langle \bv_i, \bz \rangle^2
\end{equation}
Then, we define
\begin{equation}
    h_{S, k}(\bx) \colonequals \sum_{T \in \sM_{(|S| - k) / 2}([n])} h_{S, T}(\bx) \in \RR[x_1, \dots, x_n]_k^{\hom}
\end{equation}
if $k \leq |S|$ and $k$ and $|S|$ are of equal parity, and $h_{S, k} = 0$ otherwise.
Finally, spectral extension predicts that, for suitable constants $\sigma_d^2$, we may take
\begin{equation}
    \label{eq:tEE-prediction-decomp-laurent-intro}
    \tEE[\bx^S\bx^T] \colonequals \sum_{d = 0}^{|S| \wedge |T|} \sigma_d^2 \cdot \bigg\langle h_{S, d}(\langle \bv_1, \bz \rangle, \dots, \langle \bv_n, \bz \rangle), h _{T,d}(\langle \bv_1, \bz \rangle, \dots, \langle \bv_n, \bz \rangle) \bigg\rangle_{\circ}.
\end{equation}
That is, $\tEE$ is a Gram matrix under the apolar inner product (extended to non-homogeneous polynomials as above) of the polynomials
\begin{equation}
    p_S(\bz) \colonequals \sum_{d = 0}^{|S|} \sigma_d h_{S, d}(\langle \bv_1, \bz \rangle, \dots, \langle \bv_n, \bz \rangle).
\end{equation}

We will show that, with a certain choice of the $\sigma_d$, \eqref{eq:tEE-prediction-decomp-laurent-intro} holds exactly for the $\tEE$ we work with here.
This both gives an interesting example where the spectral extension prediction is correct for high degrees (previously it was used to show lower bounds for random problems but only for fixed small degrees as the dimension $n$ diverges), and will allow us to derive the eigenvalues of $\bY^{(n)}$.
For further details and motivation of spectral extension, see the original references \cite{KB-2020-Degree4SK, Kunisky-2020-PositivityPreservingExtensions} or the overview in \cite{Kunisky-2021-Thesis}.

\subsection{Representation Theory of the Symmetric Group}

The upshot of our proposal above is that, in order to verify that \eqref{eq:tEE-prediction-decomp-laurent-intro} in fact holds, we will need to compute the $h_{S, d}$, which will involve computing orthogonal projections to the simplex-harmonic subspaces $V_{\sH}^{(d)}$.
While the previous works \cite{KB-2020-Degree4SK, Kunisky-2020-PositivityPreservingExtensions} using spectral extensions did this in a heuristic and approximate way for random vectors $\bv_i$, here we will use the special symmetries of the simplex to perform these computations exactly.
Namely, we will use that $V_{\sH}^{(d)}$ forms a representation of the symmetric group $S_n$.
Towards working with this representation, we first recall some general theory.

We will use standard tools such as Schur's lemma, character orthogonality, and characterizations of and formulae for characters of irreducible representations (henceforth \emph{irreps}) of the symmetric group.
See, e.g., the standard reference \cite{FH-2004-RepresentationTheory}, or \cite{Diaconis-1988-GroupRepresentationsProbability} for a more explicit combinatorial perspective.

The main combinatorial objects involved in the representation theory of $S_n$ are as follows.
A \emph{partition} $\tau = (\tau_1, \dots, \tau_m)$ of $n$ is an ordered sequence $\tau_1 \geq \cdots \geq \tau_m > 0$ such that $\sum_{i = 1}^m \tau_i = n$.
We write $\Part(n)$ for the set of partitions of $n$.
The associated \emph{Young diagram} is an array of left-aligned boxes, where the $k$th row contains $\tau_k$ boxes.
A \emph{Young tableau} of \emph{shape} $\tau$ is an assignment of the numbers from $[n]$ to these boxes (possibly with repetitions).
A tableau is \emph{standard} if the rows and columns are strictly increasing (left to right and top to bottom, respectively), and \emph{semistandard} if the rows are non-decreasing but the columns are strictly increasing.
The \emph{content} of a tableau $T$ on a diagram of $n$ boxes is the tuple $\mu$ such that $\mu_k$ is the number of times $k$ occurs in $T$.

The distinct (up to isomorphism) irreps of $S_n$ may be viewed as indexed by $\Part(n)$ and described in terms of tableaux.
We give a concrete treatment of one such construction in terms of \emph{Specht polynomials} below, which is less abstract than the usual presentation (e.g., in most of \cite{FH-2004-RepresentationTheory}) using the group algebra of $S_n$ but will be useful in the sequel.

\begin{definition}[Combinatorial representation]
    The \emph{combinatorial representation} associated to $\tau \in \Part(n)$, which we denote $U^{\tau}$, is the module of polynomials in $\RR[x_1, \dots, x_n]$ where, for each $k \in [m]$, exactly $\tau_k$ of the $x_i$ appear raised to the power $(k - 1)$ in each monomial.
    Thus, these polynomials are homogeneous of degree $\sum_{k = 1}^m (k - 1)\tau_k$.
\end{definition}

\begin{definition}[Specht module]
    \label{def:specht}
    The \emph{Specht module} associated to $\tau \in \Part(n)$, which we denote $W^{\tau}$, is the $S_n$-module of polynomials in $\RR[x_1, \dots, x_n]$ spanned by, over all standard tableaux $T$ of shape $\tau$,
    \begin{equation}
        \prod_{C} \prod_{\substack{i, j \in C \\ i < j}} (x_i - x_j),
    \end{equation}
    where the product is over columns $C$ of $T$.
    These polynomials are homogeneous of degree $\sum_C \binom{|C|}{2} = \sum_{k = 1}^m (k - 1)\tau_k$.
    We write $\chi_{\tau}$ for the character of $W^{\tau}$, and identify $W^{(n, 0)} \colonequals W^{(n)}$ and $\chi_{(n, 0)} \colonequals \chi_{(n)}$ for the sake of convenience, since we will often enumerate over $\tau$ of length at most two.
\end{definition}

\noindent
The key and classical fact concerning the Specht modules is that, over $\tau \in \Part(n)$, they are all non-isomorphic and enumerate all irreps of $S_n$.
The main extra fact we will use is the following, showing how to decompose a combinatorial representation in these irreps.

\begin{proposition}[Young's rule]
    For $\tau, \mu \in \Part(n)$, the multiplicity of $W^{\mu}$ in $U^{\tau}$ is the number of semistandard Young tableaux of shape $\mu$ and content $\tau$.
\end{proposition}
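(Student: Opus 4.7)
The plan is to reduce Young's rule to the classical Kostka expansion of the complete homogeneous symmetric function into Schur functions. First, I would identify $U^{\tau}$ with a standard permutation module. A monomial in $U^{\tau}$ is determined by an ordered set partition $(A_1, \dots, A_m)$ of $[n]$ with $|A_k| = \tau_k$, where $A_k$ is the set of indices of variables appearing to the power $k - 1$. The $S_n$-action on the indeterminates permutes such ordered partitions transitively with stabilizer the Young subgroup $S_{\tau} \colonequals S_{\tau_1} \times \cdots \times S_{\tau_m}$, so $U^{\tau} \iso \Ind_{S_{\tau}}^{S_n} \mathbf{1}$, which is the usual permutation module $M^{\tau}$ on $\tau$-tabloids.

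Next, I would pass to symmetric functions via the Frobenius characteristic map $\mathrm{ch}$, an isometric isomorphism from the ring of virtual characters of symmetric groups to the ring of symmetric functions that sends $M^{\tau} \mapsto h_{\tau}$ and $W^{\mu} \mapsto s_{\mu}$. The multiplicity of $W^{\mu}$ in $U^{\tau}$ then equals the coefficient of $s_{\mu}$ in the Schur expansion of $h_{\tau}$, reducing the claim to the identity
\[ h_{\tau} \;=\; \sum_{\mu \in \Part(n)} K_{\mu, \tau}\, s_{\mu}, \]
where $K_{\mu, \tau}$ is the Kostka number counting semistandard Young tableaux of shape $\mu$ and content $\tau$. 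This identity may be proved combinatorially via the RSK correspondence, which yields a bijection between two-line arrays of prescribed content and pairs of same-shape semistandard tableaux; or algebraically, by iterating Pieri's rule $s_{\lambda} \cdot h_k = \sum_{\nu} s_{\nu}$ (summed over $\nu$ obtained from $\lambda$ by adding a horizontal strip of size $k$) on the factorization $h_{\tau} = h_{\tau_1} \cdots h_{\tau_m}$, and recognizing the resulting sequences of horizontal strips as precisely the row shapes of semistandard tableaux of content $\tau$.

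The main obstacle is the dictionary between the representation-theoretic and symmetric-function sides: one must develop enough of the formalism of $\mathrm{ch}$ (its multiplicativity under induction from Young subgroups, its effect on Specht modules, and its compatibility with the Hall inner product) to justify the translation above. Alternatively, one may avoid symmetric functions altogether: by Frobenius reciprocity, the multiplicity equals $\dim (W^{\mu})^{S_{\tau}}$, and one exhibits an explicit basis of these invariants indexed by semistandard tableaux of shape $\mu$ and content $\tau$, using the Garnir straightening relations inside $W^{\mu}$. This latter route is more self-contained but requires care in verifying both linear independence (by a leading-monomial argument) and spanning (by repeated Garnir reductions). In either case the result is entirely classical, and we may simply quote it from \cite{FH-2004-RepresentationTheory} or \cite{Diaconis-1988-GroupRepresentationsProbability}.
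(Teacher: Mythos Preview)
The paper does not prove this proposition at all; it simply states Young's rule as a classical fact, having already referred the reader to \cite{FH-2004-RepresentationTheory} and \cite{Diaconis-1988-GroupRepresentationsProbability} for background on the representation theory of $S_n$. Your proposal therefore goes well beyond what the paper does: you sketch two standard and correct routes (via the Frobenius characteristic and the Kostka expansion of $h_{\tau}$, or via Frobenius reciprocity and an explicit basis of $S_{\tau}$-invariants in $W^{\mu}$), and then, in your final sentence, land on exactly the paper's own position by noting that the result may simply be quoted from those same references. Both the identification $U^{\tau} \cong \Ind_{S_{\tau}}^{S_n}\mathbf{1}$ and the subsequent reductions are accurate, so there is no gap; if anything, you have supplied more justification than the paper requires.
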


\noindent
In particular, this multiplicity is zero unless $\mu \trianglelefteq \tau$, a ``majorization'' ordering relation meaning that $\tau$ may be formed by starting with $\mu$ and repeatedly producing modified Young diagrams by moving one box at a time up and to the right.
For example, this rule tells us that there should be a copy of $W^{\tau}$ in $U^{\tau}$.
Indeed, we have already seen that $W^{\tau}$ and $U^{\tau}$ consist of homogeneous polynomials of the same degree.
Moreover, all monomials in any polynomial of $W^{\tau}$ have the same set of exponents, where the number of $x_i$ raised to the $(k - 1)$th power is the number of columns of length at least $k$ in the Young diagram of $\tau$, which is just the length of the $k$th row, $\tau_k$.
Thus $W^{\tau}$ occurs as a subspace of $U^{\tau}$, verifying the consequence of Young's rule.

Finally, we will use the following decomposition of the representation consisting of polynomials over the hypercube $\{\pm 1\}^n$ given in a recent work.
We correct a small typo present in the published version in the limits of the second direct sum below.
\begin{proposition}[Theorem 3.2 of \cite{BGP-2016-SOSHypercube}]
    \label{prop:hypercube-rep}
    Let $\RR[\{\pm 1\}^n] \colonequals \RR[x_1, \dots, x_n] / \sI$, where $\sI$ is the ideal generated by $\{x_i^2 - 1\}_{i = 1}^n$.
    Recall that $d_{\max} \colonequals \lfloor n / 2 \rfloor$.
    Then,
    \begin{equation}
        \RR[\{\pm 1\}^n] = \bigoplus_{d = 0}^{d_{\max}} \bigoplus_{k = 0}^{n - 2d} \left(\sum_{i = 1}^n x_i\right)^k W^{(n - d, d)}.
    \end{equation}
    (Note that this is not merely a statement of the isomorphism type of the irreps occurring in $\RR[\{\pm 1\}^n]$ when it is viewed as a representation of $S_n$, but an actual direct sum decomposition of the space of polynomials, where the $W^{(n - d, d)}$ are meant as specific subspaces of polynomials, per Definition~\ref{def:specht}.)
\end{proposition}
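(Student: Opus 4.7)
The plan is to use Schur's lemma to separate copies of different Specht modules, then to reduce the case of a fixed Specht module $W^{(n-d,d)}$ to a concrete non-vanishing check for one explicit Specht polynomial. Write $L \colonequals \sum_{i=1}^n x_i$. Since $L$ is $S_n$-invariant, multiplication by $L$ is an $S_n$-equivariant map, so each $L^k W^{(n-d,d)}$ is an equivariant image of $W^{(n-d,d)}$ and therefore lies in the $W^{(n-d,d)}$-isotypic component of $\RR[\{\pm 1\}^n]$. By Schur's lemma, summands corresponding to different values of $d$ are automatically in direct sum, so the only remaining issue is to show that, for each fixed $d$, the spaces $L^k W^{(n-d,d)}$ for $k = 0, 1, \dots, n-2d$ are linearly independent in $\RR[\{\pm 1\}^n]$.

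To analyze this, I would decompose $\RR[\{\pm 1\}^n] = \bigoplus_{j=0}^n E_j$ into $L$-eigenspaces, where $E_j$ is the span of the indicator functions of $\bx \in \{\pm 1\}^n$ with exactly $j$ coordinates equal to $-1$; $L$ acts by the scalar $n-2j$ on $E_j$, and these eigenvalues are distinct. Each $E_j$ is $S_n$-isomorphic to the permutation representation on $j$-subsets of $[n]$, which is $U^{(n-j,j)}$; Young's rule then gives $E_j \cong \bigoplus_{d=0}^{j \wedge (n-j)} W^{(n-d,d)}$ with unit multiplicities. Hence the $W^{(n-d,d)}$-isotypic component $M_d$ of $\RR[\{\pm 1\}^n]$ factors as $M_d \cong \RR^{n-2d+1} \otimes W^{(n-d,d)}$, and the operator $L$ acts on $M_d$ as $A \otimes \Id$, where $A$ is diagonal on $\RR^{n-2d+1}$ with distinct eigenvalues $\{n-2j : d \le j \le n-d\}$. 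Under this factorization, the Specht copy $W \colonequals W^{(n-d,d)}$ from Definition~\ref{def:specht} corresponds to some vector $v_0 \in \RR^{n-2d+1}$, and linear independence of $W, LW, \dots, L^{n-2d}W$ is equivalent to linear independence of $v_0, Av_0, \dots, A^{n-2d}v_0$. Because $A$ has $n-2d+1$ distinct eigenvalues, this cyclic-vector condition reduces to requiring that the spectral projection $\pi_j\colon \RR[\{\pm 1\}^n] \to E_j$ be nonzero on $W$ for every $j \in \{d, d+1, \dots, n-d\}$.

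Since $\pi_j|_W$ is $S_n$-equivariant and $W$ is irreducible, Schur's lemma implies $\pi_j|_W$ is either zero or injective, so it suffices to exhibit a single element of $W$ that does not vanish identically on the layer $\{\bx : \sum_i x_i = n-2j\}$. I would take the standard tableau whose second row is $(2, 4, \dots, 2d)$, producing the Specht polynomial
\[ w(\bx) \;=\; \prod_{k=1}^{d}(x_{2k-1} - x_{2k}). \]
For $\bx \in \{\pm 1\}^n$, $w(\bx) \ne 0$ forces $x_{2k-1} \ne x_{2k}$ for every $k \in [d]$, so the first $2d$ coordinates contribute zero to $\sum_i x_i$ while the remaining $n-2d$ coordinates are free and can realize any value $n-2j$ for $j \in \{d, d+1, \dots, n-d\}$; hence $w$ fails to vanish on every required layer. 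This closes the linear-independence step, and matching total dimensions with $2^n = \dim \RR[\{\pm 1\}^n]$ then forces the direct sum to fill the whole space. The principal difficulty is this cyclic-vector step—specifically, choosing a Specht polynomial whose hypercube support actually meets every layer indexed by $j \in \{d, \dots, n-d\}$; the remaining ingredients are standard representation theory.
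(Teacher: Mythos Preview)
Your proof is correct. Note that the paper does not supply its own proof of this proposition; it is quoted as Theorem~3.2 of \cite{BGP-2016-SOSHypercube}, so there is no in-paper argument to compare against.

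Your approach is self-contained and clean: the decomposition of $\RR[\{\pm 1\}^n]$ into $L$-eigenspaces $E_j$, the identification of each $E_j$ with a permutation module on $j$-subsets, Young's rule giving multiplicity one for $W^{(n-d,d)}$ in each $E_j$ with $d \le j \le n-d$, and the cyclic-vector reduction via the Vandermonde argument are all sound. The explicit Specht polynomial $w(\bx) = \prod_{k=1}^d(x_{2k-1} - x_{2k})$ is exactly the right witness, and your observation that its hypercube support forces the first $2d$ coordinates to contribute zero to $L$ while the remaining $n-2d$ coordinates are free---hence meeting every layer $j \in \{d, \dots, n-d\}$---is both correct and the genuine crux. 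One cosmetic point: the combinatorial representation $U^{(n-j,j)}$ as defined in the paper requires $n-j \ge j$, so for $j > n/2$ one should instead write $E_j \cong U^{(j, n-j)}$; this changes nothing, since $E_j \cong E_{n-j}$ via $\bx \mapsto -\bx$ and the Specht decomposition is the same. Also, your final dimension-count sentence is slightly redundant: once you have shown $\bigoplus_{k=0}^{n-2d} L^k W^{(n-d,d)}$ fills the isotypic component $M_d$ for each $d$, and the $E_j$ decomposition already tells you that only two-row Specht modules occur, the equality with $\RR[\{\pm 1\}^n]$ follows without a separate count.
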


\paragraph{Characters and isotypic projection}
We recall one more general representation-theoretic idea that will play an important role in our calculations.
\begin{definition}[Isotypic component and projection]
    Let $V$ be a finite-dimensional representation of a finite group $G$, $U$ an irrep of $G$ with character $\chi$, and $W \subseteq V$ the direct sum of all irreps isomorphic to $U$ in a decomposition of $V$ into irreps.
    Then, $W$ is called the \emph{isotypic component} of $U$ in $V$, and the linear map $\bm P: V \to V$ defined by
    \begin{equation}
        \bP v \colonequals \frac{\dim(U)}{|G|}\sum_{g \in G} \chi(g) gv
    \end{equation}
    is called the \emph{isotypic projection} to $W$.
\end{definition}
\noindent
We establish some basic properties of these definitions below.
\begin{proposition} \label{prop:isotypic}
    $W$ and $\bP$ satisfy the following:
    \begin{enumerate}
        \item $W$ does not depend on the choice of decomposition of $V$ into irreps.
        \item $\bP$ is a projection to $W$ (i.e., $\bP^2 = \bP$ and the image of $\bP$ is $W$).
        \item $\bP$ is an orthogonal projection with respect to any inner product on $V$ that is $G$-invariant, i.e., satisfying $\langle gv, gw \rangle = \langle v, w \rangle$ for all $g \in G$ and $v, w \in V$.
    \end{enumerate}
\end{proposition}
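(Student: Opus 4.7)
My plan is to establish (2) first, derive (1) as an immediate corollary, and handle (3) by a short adjoint computation.

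For (2), I would begin by observing that $\bP$ is $G$-equivariant: the substitution $g \mapsto hgh^{-1}$ in the defining sum, together with the fact that $\chi$ is a class function, gives $h \bP = \bP h$ for every $h \in G$. Fixing any decomposition $V = \bigoplus_i V_i$ into irreducible subrepresentations, Schur's lemma forces $\bP$ to act on each $V_i$ as a scalar $c_i$ (using that the relevant irreps of $S_n$ are absolutely irreducible, so the commutant is $\RR$). To identify $c_i$ I would take the trace,
\[ c_i \dim V_i = \Tr(\bP|_{V_i}) = \frac{\dim U}{|G|} \sum_{g \in G} \chi(g) \chi_i(g), \]
where $\chi_i$ is the character of $V_i$, and invoke the orthogonality relations for characters (using that $S_n$-characters are real-valued, so $\chi_i(g) = \chi_i(g^{-1})$) to conclude $c_i = 1$ if $V_i \cong U$ and $c_i = 0$ otherwise. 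Consequently $\bP$ is a projection with image $\bigoplus_{V_i \cong U} V_i = W$.

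Part (1) then follows for free: the formula defining $\bP$ refers only to $V$, $G$, and $\chi$, with no reference to any decomposition, so $W = \img(\bP)$ is intrinsically determined.

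For (3), given a $G$-invariant inner product on $V$, each $g \in G$ acts by an orthogonal operator, so its adjoint equals $g^{-1}$. Taking the adjoint of $\bP$ termwise and re-indexing the sum by $g \mapsto g^{-1}$ yields
\[ \bP^{*} = \frac{\dim U}{|G|} \sum_{g \in G} \chi(g^{-1}) g, \]
which coincides with $\bP$ since $\chi(g^{-1}) = \chi(g)$ for the real-valued characters of $S_n$. Combined with (2), this shows $\bP$ is an orthogonal projection onto $W$.

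There is no serious obstacle; the only point to watch is the $\chi(g)$ versus $\chi(g^{-1})$ distinction, which is invisible here because $S_n$ has real characters but would otherwise require the usual complex-conjugation modification. Beyond this, the argument is routine.
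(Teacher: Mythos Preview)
The paper states this proposition without proof, treating it as a standard fact from representation theory (the surrounding text refers to ``standard references'' like \cite{FH-2004-RepresentationTheory}). Your argument is the textbook one---Schur's lemma plus character orthogonality for (2), intrinsicness of $\img(\bP)$ for (1), and the adjoint computation for (3)---and is correct as written for $G = S_n$, which is the only case the paper uses. Your care in flagging the $\chi(g)$ versus $\chi(g^{-1})$ issue and the absolute irreducibility of $S_n$-irreps over $\RR$ is appropriate, since the proposition as stated is for a general finite group $G$ over $\RR$, where these points would require more attention; but for the paper's purposes this is not a gap.
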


\paragraph{Combinatorial interpretations of characters}
As we will be computing extensively with $\chi_{(n - d, d)}$ below, it will be useful to establish a concrete combinatorial description of the values of these characters.
For a set $A = \{a_1, \dots, a_k\} \subseteq [n]$ and $\pi \in S_n$, we write $\pi(A) \colonequals \{\pi(a_1), \dots, \pi(a_k)\}$.

\begin{definition}
    For each $0 \leq d \leq n$ and $\pi \in S_n$, let
    \begin{equation}
        c_d(\pi) \colonequals \#\left\{ A \in \binom{[n]}{a}: \pi(A) = A \right\}.
    \end{equation}
\end{definition}

\begin{proposition}
    \label{prop:two-rows-character}
    For all $1 \leq d \leq n$, $\chi_{(n - d, d)} = c_d - c_{d - 1}$, and $\chi_{(n)} = c_0 = 1$.
\end{proposition}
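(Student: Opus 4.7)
}
The plan is to identify $c_d$ as the character of a concrete permutation representation that is exactly one of the combinatorial representations $U^{\tau}$ defined in the paper, then to apply Young's rule to decompose it into Specht modules and telescope.

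First, I would observe that $c_d(\pi)$ is by definition the trace of the permutation action of $\pi$ on the vector space $\RR^{\binom{[n]}{d}}$ with basis indexed by $d$-element subsets of $[n]$. This permutation representation is isomorphic to the combinatorial representation $U^{(n-d,d)}$: indeed, $U^{(n-d,d)}$ consists of polynomials in which $d$ of the $x_i$ appear with exponent $1$ and the remaining $n-d$ appear with exponent $0$, so its monomial basis $\{\bx^S : S \in \binom{[n]}{d}\}$ carries exactly the permutation action of $S_n$ on $d$-subsets. Hence $c_d$ is the character of $U^{(n-d,d)}$ (and $c_0 \equiv 1$ is the character of $U^{(n)} = W^{(n)}$, giving the trivial case).

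Next, I would apply Young's rule to decompose $U^{(n-d,d)}$ into irreducible Specht modules. The multiplicity of $W^{(n-k,k)}$ in $U^{(n-d,d)}$ is the number of semistandard Young tableaux of shape $(n-k,k)$ and content $(n-d,d)$, i.e., with $n-d$ entries equal to $1$ and $d$ entries equal to $2$. Since columns must strictly increase, the entire second row (of length $k$) must consist of $2$'s, forcing $k \leq d$; the remaining $d - k$ copies of $2$ then necessarily fill the rightmost $d-k$ boxes of the first row, with the $n-d$ copies of $1$ filling its left part. This tableau exists and is unique precisely when $0 \leq k \leq d$. No other two-row shape $(n-k,k)$ with $k > d$ can contribute, and shapes with more than two rows cannot appear either by the majorization condition $\mu \trianglelefteq (n-d,d)$. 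Taking characters yields
\begin{equation*}
    c_d \;=\; \sum_{k=0}^{d} \chi_{(n-k,k)}.
\end{equation*}

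Finally, subtracting the analogous identity for $c_{d-1}$ from the identity for $c_d$ gives $\chi_{(n-d,d)} = c_d - c_{d-1}$ for all $1 \leq d \leq n$, while $d = 0$ gives $\chi_{(n)} = c_0 = 1$. I do not anticipate any genuine obstacle here: the only substantive step is the count of semistandard tableaux with entries in $\{1,2\}$, and this is immediate from the column-strict / row-weak conditions applied to a two-row shape.
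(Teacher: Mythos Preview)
Your proposal is correct and is essentially identical to the paper's Proof~2, which likewise identifies $c_d$ as the character of $U^{(n-d,d)}$, applies Young's rule to obtain $c_d = \sum_{i=0}^d \chi_{(n-i,i)}$, and inverts. The paper additionally offers a separate Proof~1 via the Frobenius character formula, expanding the generating function $(1-x)\prod_i (1+x^{|C_i|})$ over the cycles of $\pi$, but your argument matches one of the two given proofs.
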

\noindent
We give two proofs, one using the Frobenius generating function formula for irrep characters and another using combinatorial representations.

\begin{proof}[Proof 1]
    The Frobenius formula implies that, for $\pi$ having cycles $C_1, \dots, C_k$,
    \begin{equation}
        \chi_{(n - d, d)}(\pi) = [x^d]\left\{(1 - x) \prod_{i = 1}^k (1 + x^{|C_i|})\right\}.
    \end{equation}
    Since a subset fixed by $\pi$ is a disjoint union of cycles, the product term is the generating function of the numbers of fixed subsets of all sizes:
    \begin{equation}
        \prod_{i = 1}^k (1 + x^{|C_i|}) = \sum_{d = 0}^n c_d(\pi) x^d.
    \end{equation}
    The result follows since multiplication by $(1 - x)$ makes the coefficients precisely the claimed differences.
\end{proof}

\begin{proof}[Proof 2]
    The combinatorial representation $U^{(n -d, d)}$ is the subspace of multilinear polynomials in $\RR[x_1, \dots, x_n]^{\hom}_d$.
    By Young's rule, $U^{(n -d, d)} = \bigoplus_{i = 0}^d W^{(n - i, i)}$.
    On the other hand, clearly the character of $U^{(n - d, d)}$ is $c_d$.
    Thus, $\sum_{i = 0}^d \chi_{(n - d, d)} = c_d$, and the result follows by inverting this relation.
\end{proof}

\paragraph{Partial summation of characters}

We will use the following computation of partial sums of the characters $\chi_{(n - d, d)}$ over certain sets of permutations.
The proof of this statement is somewhat involved, so we leave it to Appendix~\ref{app:class-functions}.
\begin{lemma}
    \label{lem:restricted-char-sums}
    Let $0 \leq a, b \leq d$, $A \in \binom{[n]}{a}$, $B \in \binom{[n]}{b}$, and $0 \leq k \leq a \wedge b$.
    Then,
    \begin{equation}
        \frac{1}{n!}\sum_{\substack{\pi \in S_n \\ |\pi(A) \cap B| = k}} \chi_{(n - d, d)}(\pi) = \left\{\begin{array}{ll} 0 & \text{if } a \wedge b < d, \\ (-1)^{k + |A \cap B|} \frac{\binom{d}{k}}{\binom{n}{d, d - |A \cap B|, n - 2d + |A \cap B|}} & \text{if } a = b = d. \end{array} \right.
    \end{equation}
\end{lemma}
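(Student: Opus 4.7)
My starting point is to reduce the sum to a function of only $a$, $b$, $c \colonequals |A \cap B|$, and $k$. The substitution $\pi \mapsto \alpha\pi\alpha^{-1}$ preserves $\chi_{(n-d,d)}$ by cyclic invariance of trace, while a direct computation shows $|\alpha\pi\alpha^{-1}(A) \cap B| = |\pi(\alpha^{-1}(A)) \cap \alpha^{-1}(B)|$; so the sum is invariant under the diagonal $S_n$-action on $(A,B)$ and depends only on $(a,b,c,k)$. Writing $\chi_{(n-d,d)} = c_d - c_{d-1}$ via Proposition~\ref{prop:two-rows-character}, it then suffices to evaluate
\[ T_m(a,b,c,k) \colonequals \frac{1}{n!}\sum_{\pi \in S_n} c_m(\pi)\,\One\{|\pi(A)\cap B|=k\} \]
for $m \in \{d-1,d\}$ and take the difference.

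For the vanishing case $a \wedge b < d$, a representation-theoretic argument bypasses calculation. Grouping the summands by the image $X \colonequals \pi(A)$ rewrites the sum as
\[ \sum_{X\,:\,|X \cap B| = k}\; \sum_{\pi \in \pi_X H_A} \chi_{(n-d,d)}(\pi), \]
where $H_A = \mathrm{Stab}(A) \iso S_a \times S_{n-a}$ and $\pi_X$ is any permutation sending $A$ to $X$. The inner sum equals $|H_A|\cdot\Tr(\rho_{(n-d,d)}(\pi_X) P_{H_A})$, with $P_{H_A}$ the projector onto $H_A$-invariants in $W^{(n-d,d)}$. By Frobenius reciprocity, $\dim(W^{(n-d,d)})^{H_A}$ equals the multiplicity of $W^{(n-d,d)}$ inside $\Ind^{S_n}_{H_A}\One \iso \RR^{\binom{[n]}{a}}$, which by Young's rule (equivalently, Proof 2 of Proposition~\ref{prop:two-rows-character}) vanishes whenever $a < d$. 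Hence $P_{H_A} = 0$ on $W^{(n-d,d)}$ and the whole sum is zero. The case $b < d$ reduces to $a < d$ via the symmetry $S(A,B,k) = S(B,A,k)$, which follows from the substitution $\pi \mapsto \pi^{-1}$, the real-valuedness of $\chi_{(n-d,d)}$, and the identity $|\pi^{-1}(A)\cap B| = |\pi(B)\cap A|$.

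For the case $a = b = d$, I would proceed by direct enumeration. Expanding $c_m(\pi) = \sum_{C\in \binom{[n]}{m}} \One\{\pi(C) = C\}$ and swapping sums gives
\[ T_m = \frac{1}{n!} \sum_{C \in \binom{[n]}{m}} \#\{\pi \in S_n : \pi(C) = C,\ |\pi(A)\cap B| = k\}. \]
A permutation fixing $C$ setwise factors as a pair $(\sigma,\tau) \in S_C \times S_{[n]\setminus C}$, and $|\pi(A)\cap B| = |\sigma(A\cap C)\cap(B\cap C)| + |\tau(A\setminus C)\cap(B\setminus C)|$. Parameterizing by $\alpha \colonequals |A\cap C|$, $\beta \colonequals |B\cap C|$, $\gamma \colonequals |A\cap B\cap C|$, and $j \colonequals |\sigma(A\cap C)\cap(B\cap C)|$, each factor count is a standard product of binomials and factorials, while the number of $C$ with prescribed $(\alpha,\beta,\gamma)$ is a product of four binomials. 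This yields an explicit multi-index sum for $T_m$.

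The main obstacle is the algebraic collapse of $T_d - T_{d-1}$ to the claimed closed form. I expect three mechanisms: (i) several inner sums collapse via the Vandermonde--Chu identity $\sum_j \binom{x}{j}\binom{y}{r-j} = \binom{x+y}{r}$; (ii) passing from $m = d-1$ to $m = d$ telescopes via Pascal's identity, matching the two halves of the difference $T_d - T_{d-1}$; and (iii) a residual alternation supplies the sign $(-1)^{k+c}$ through an inclusion--exclusion cancellation. The denominator $\binom{n}{d,d-c,n-2d+c}$ has the natural interpretation as the multinomial counting ways of splitting $[n]$ into blocks aligned with the partition $A\cap B$, $A\triangle B$, $[n]\setminus(A\cup B)$, which provides a guide for organizing the bookkeeping. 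This manipulation is delicate but mechanical, and the authors appropriately defer it to the appendix.
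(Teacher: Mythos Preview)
Your vanishing argument for $a \wedge b < d$ is correct and cleaner than the paper's. The paper introduces auxiliary class functions $f_{a,k}(\pi) = \#\{A \in \binom{[n]}{a}: |\pi(A) \cap A| = k\}$ and $g_{a,b,k,\ell}(\pi)$, expresses $g$ as a combination of the $f_{a,j}$, then builds a recursion for the $f_{a,k}$ via partial sums $F_{a,j} = \sum_k \binom{k}{j} f_{a,k}$ and an Euler-transform inversion to show inductively that $\chi_{(n-d,d)}$ has zero coefficient in the character expansion of $f_{a,k}$ whenever $a < d$. Your Frobenius-reciprocity route bypasses all of this: coset-summing over $H_A = \mathrm{Stab}(A)$ exhibits the inner sum as $|H_A|\,\Tr(\rho(\pi_X)P_{H_A})$, and Young's rule applied to $\Ind_{H_A}^{S_n}\One \cong U^{(n-a,a)}$ kills $(W^{(n-d,d)})^{H_A}$ when $a<d$. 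The $\pi\mapsto\pi^{-1}$ symmetry then handles $b<d$. This is the conceptually right explanation and buys you the vanishing case essentially for free.

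For $a=b=d$ your proposed route---expand $c_m$ over invariant sets $C$, factor $\pi$ over $S_C\times S_{[n]\setminus C}$, and parameterize by intersection sizes---is different from the paper's. The paper instead uses its computation $\frac{1}{n!}\sum_\pi \chi_{(n-d,d)}(\pi) f_{d,j}(\pi) = (-1)^{d+j}\binom{d}{j}$ to rewrite $\langle \chi_{(n-d,d)}, g_{d,b,k,\ell}\rangle$ as a $d$th-order finite difference in $j$, then proves the closed form by \emph{induction on $n$}: splitting off the contribution from the last binomial factor and applying Pascal's identity reduces $S(n,b,d,k,\ell)$ to $S(n-1,b,d,k,\ell)+S(n-1,b-1,d,k,\ell)$, which matches the Pascal recursion for $\binom{n-2d}{b-d}$. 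Your sketch is plausible, but the anticipated collapse (Vandermonde--Chu plus Pascal telescoping producing $(-1)^{k+c}\binom{d}{k}$) is asserted rather than shown, and this is precisely where the labor lies; the paper's induction on $n$ is the one non-obvious organizational device that makes the algebra close. Either route should work in principle, but as written your proposal stops at the hard part.
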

\noindent
We note also that the special case $a = b = k$ gives the summation of the character over all $\pi$ with a specified mapping $\pi(A) = B$.

\subsection{The Simplex-Harmonic Representation}

We now discuss how $V_{\sH}^{(d)}$ fits into this framework.
First, let us see why this is a representation of $S_n$.
We have that $S_n$ acts on $\RR^{n - 1}$ by permuting the $\bv_i$ (since $\sum_{i = 1}^n \bv_i = 0$ this is well-defined); this is the irreducible ``standard representation'' of $S_n$.
The symmetric powers of this irrep give actions of $S_n$ on $\RR[z_1, \dots, z_{n - 1}]_d^{\hom}$ by likewise permuting the $\langle \bv_i, \bz \rangle$, products of which form an overcomplete set of monomials.
$V_{\sH}^{(d)}$ is an invariant subspace of this action.
The next result identifies the isomorphism type of this representation.

\begin{proposition}[Isomorphism type]
    \label{prop:harmonic-rep}
    $V_{\sH}^{(d)} \cong W^{(n - d, d)}$.
    The map $\Psi: \RR[x_1, \dots, x_n]_d^{\hom} \to \RR[z_1, \dots, z_{n - 1}]_d^{\hom}$ given by defining $\Psi(\bx^S) = \prod_{i \in S} \langle \bv_i, \bz \rangle$ and extending by linearity is an isomorphism between $W^{(n - d, d)}$ and $V_{\sH}^{(d)}$ when restricted to $W^{(n - d, d)}$.
\end{proposition}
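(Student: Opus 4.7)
The strategy is to show that $\Psi|_{W^{(n-d, d)}}$ is a nonzero $S_n$-equivariant map into $V_{\sH}^{(d)}$ (so that Schur's lemma forces injectivity) and then to match dimensions to obtain surjectivity. Equivariance of $\Psi$ is immediate: the action $x_i \mapsto x_{\pi(i)}$ on the left corresponds under $\Psi$ to the standard-representation action $\langle \bv_i, \bz \rangle \mapsto \langle \bv_{\pi(i)}, \bz \rangle$ on $\RR[z_1, \dots, z_{n-1}]$, the latter being well-defined because $\sum_i \bv_i = 0$. To verify $\Psi(W^{(n-d, d)}) \subseteq V_{\sH}^{(d)}$, I would evaluate $\Psi$ on the Specht generator associated to a standard tableau $T$ of shape $(n-d, d)$:
\[ \Psi\!\left(\prod_{k=1}^d (x_{T(1,k)} - x_{T(2,k)})\right) = \prod_{k=1}^d \langle \bv_{T(1,k)} - \bv_{T(2,k)}, \bz \rangle. \]
Applying $\langle \bv_i, \bm\partial \rangle$ to the $k$-th factor produces the scalar $\langle \bv_i, \bv_{T(1,k)} - \bv_{T(2,k)} \rangle$, which vanishes unless $i$ lies in the $k$-th column of $T$. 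Because columns of a standard tableau are pairwise disjoint as subsets of $[n]$, for each fixed $i$ at most one factor is differentiated non-trivially; a second application of $\langle \bv_i, \bm\partial \rangle$ then annihilates the result, so the image lies in $V_{\sH}^{(d)}$.

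For injectivity I would exhibit one specific tableau, say $T(1,k) = k$ and $T(2,k) = n-d+k$ for $k \in [d]$, and check that the image is nonzero. The difference vectors $\bv_k - \bv_{n-d+k}$ are linearly independent in $\RR^{n-1}$: their preimages $\be_k - \be_{n-d+k}$ in $\RR^n$ are linearly independent and orthogonal to $\one$, and the quotient map $\RR^n \to \RR^{n-1}$ with kernel $\RR\one$ restricts to an isomorphism on $\one^\perp$. Hence the image is a product of $d$ linearly independent linear forms and is nonzero. Since $W^{(n-d, d)}$ is irreducible, Schur's lemma upgrades this nonzero equivariant map to an injection.

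It remains to show $\dim V_{\sH}^{(d)} = \binom{n}{d} - \binom{n}{d-1}$. By Proposition~\ref{prop:apolar-adjointness}, $V_{\sH}^{(d)}$ is the apolar-orthogonal complement of the degree-$d$ ideal $I_d = \sum_{i=1}^n \langle \bv_i, \bz \rangle^2 \cdot \RR[z]_{d-2}^{\hom}$, and since the standard representation acts by orthogonal transformations, the apolar inner product is $S_n$-invariant, so $V_{\sH}^{(d)} \cong \RR[z]_d^{\hom}/I_d$ as $S_n$-modules. Using the presentation $\RR[z_1, \dots, z_{n-1}] \cong \RR[x_1, \dots, x_n]/(\sum_i x_i)$, this quotient becomes $M_d/\sigma M_{d-1}$, where $M_d$ denotes the space of multilinear degree-$d$ polynomials in $x_1, \dots, x_n$ and $\sigma = \sum_i x_i$. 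The main obstacle, and the crux of the count, is to verify injectivity of $\sigma\cdot : M_{d-1} \to M_d$ for $d \leq d_{\max}$. I would do this by computing in the monomial basis that the adjoint composition $\sigma^* \sigma$ on $M_{d-1}$ equals $(n-d+1) I + J$, where $J$ is the adjacency matrix of the Johnson graph on $\binom{[n]}{d-1}$, and then invoking the algebraic identity
\[ (d-1-j)(n-d+1-j) - j + (n-d+1) = (n-d+1-j)(d-j) \]
to conclude that $-(n-d+1)$ is absent from the Johnson spectrum $\{(d-1-j)(n-d+1-j) - j : 0 \leq j \leq d-1\}$, since neither $j = d$ nor $j = n-d+1$ lies in $\{0, \dots, d-1\}$ when $d \leq d_{\max}$. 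This gives $\dim V_{\sH}^{(d)} = \binom{n}{d} - \binom{n}{d-1} = \dim W^{(n-d, d)}$, and the $S_n$-equivariant injection $\Psi|_{W^{(n-d, d)}} \hookrightarrow V_{\sH}^{(d)}$ between equal-dimensional spaces is forced to be an isomorphism.
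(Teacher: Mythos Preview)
Your proof is correct and follows essentially the same route as the paper. The verification that $\Psi(W^{(n-d,d)}) \subseteq V_{\sH}^{(d)}$ by differentiating the Specht generators is identical in substance to the paper's computation, and the injectivity via Schur's lemma matches the paper's argument that $W^{(n-d,d)}$ cannot meet $\ker\Psi = \langle \sum_i x_i\rangle$ nontrivially. Your dimension count is also effectively the same calculation the paper performs: your matrix $\sigma^*\sigma = (n-d+1)I + J$ is precisely the Gram matrix of the ``slice constraint'' vectors $\ba_S$ the paper writes down, so both proofs reduce to the same Johnson-graph spectral bound; your factorization $(n-d+1-j)(d-j)$ is a slightly cleaner way to see the eigenvalues are nonzero than the paper's citation of the minimum Johnson eigenvalue.
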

\begin{proof}
    Let us abbreviate $W = W^{(n - d, d)}$ and $V = V_{\sH}^{(d)}$.
    We first compute the dimensions of $V$ and $W$ and show that they are equal.

    For $W$, by the hook length formula,
    \begin{align}
        \dim(W)
        &= \frac{n!}{d! \cdot (n - d + 1) \cdots (n - 2d + 2) \cdot (n - 2d)!} \nonumber \\
        &= \frac{n! (n - 2d + 1)}{d! (n - d + 1)!} \nonumber \\
        &= \binom{n}{d} \cdot \frac{n - 2d + 1}{n - d + 1} \nonumber \\
        &= \binom{n}{d} - \binom{n}{d - 1}.
    \end{align}
    (The same also follows by evaluating $\chi_{(n - d, d)}$ on the identity using the formula from Proposition~\ref{prop:two-rows-character}.)

    For $V$, we note that $V$ is isomorphic to the subspace of $\Sym^d(\RR^n)$ consisting of symmetric tensors that are zero at any position with a repeated index and have any one-dimensional slice summing to zero.
    The tensors satisfying the first constraint have dimension $\binom{n}{d}$, and there are $\binom{n}{d - 1}$ one-dimensional slices.
    We verify that these slice constraints are linearly independent: they may be identified with the vectors $\ba_S \in \RR^{\binom{[n]}{d}}$ for $S \in \binom{[n]}{d - 1}$ with entries $(\ba_S)_T = \One\{S \subseteq T\}$.
    These vectors satisfy
    \begin{equation}
        \langle \ba_S, \ba_{S^{\prime}} \rangle = \left\{\begin{array}{ll} n - d + 1 & \text{if } S = S^{\prime}, \\ 1 & \text{if } |S \cap S^{\prime}| = d - 2, \\ 0 & \text{otherwise}. \end{array}\right.
    \end{equation}
    Therefore, their Gram matrix is equal to $(n - d + 1)\bm I_{\binom{n}{d - 1}} + \bA$, where $\bA$ is the adjacency matrix of the Johnson graph $J(n, d - 1)$.
    Its most negative eigenvalue is equal to $-\min(d - 1, n - d + 1) = -(d - 1)$ (see, e.g., Section 1.2.2 of \cite{BV-SRG}), the equality following since $d \leq n / 2$.
    The Gram matrix of the $\ba_S$ is therefore positive definite.
    Thus the $\ba_S$ are linearly independent, and $\dim(V) = \binom{n}{d} - \binom{n}{d - 1} = \dim(W)$.

    Therefore, to show $V \cong W$ it suffices to show that one of $V$ or $W$ contains a copy of the other.
    We show that $V$ contains a copy of $W$.
    Recall from Definition~\ref{def:specht} that $W$ is the subspace of $\RR[x_1, \dots, x_n]^{\hom}_d$ spanned by
    \begin{align}
        \prod_{a = 1}^d (x_{i_a} - x_{j_a}) &\text{ for } i_1, \dots, i_{n - d}, j_1, \dots, j_d \in [n] \text{ distinct and satisfying} \nonumber \\
        &\text{ } i_1 <  \cdots < i_{n - d}, \, \, j_1 < \cdots < j_d,\text{ and } \, i_a < j_a \text{ for } 1 \leq a \leq d.
        \label{eq:specht-def}
    \end{align}
    Note that $\ker(\Psi)$ is the ideal generated by $x_1 + \cdots + x_n$, and therefore is an invariant subspace of the $S_n$ action.
    Since $W$ is also an invariant subspace, and is irreducible, if $W$ intersected $\ker(\Psi)$ non-trivially then $W$ would be contained in $\ker(\Psi)$, which is evidently not true (for instance, none of the basis elements in \eqref{eq:specht-def} map to zero).
    Thus $\Psi$ is an isomorphism on $W$, so it suffices to show that $\Psi(W) \subseteq V$.
    Indeed, all polynomials of $\Psi(W)$ also belong to $V$: writing $\bM = \Gram(\bv_1, \dots, \bv_n)$, for any basis element and $k \in [n]$,
    \begin{align}
      &\langle \bv_k, \bm \partial \rangle^2 \prod_{a = 1}^d(\langle \bv_{i_a}, \bz \rangle - \langle \bv_{j_a}, \bz \rangle) \nonumber \\
      &\hspace{1cm} = \sum_{\{a, b\} \in \binom{[n]}{2}} (M_{k, i_a} - M_{k, j_a})(M_{k, i_b} - M_{k, j_b}) \prod_{c \in [d] \setminus \{a, b\}}(\langle \bv_{i_c}, \bz \rangle - \langle \bv_{j_c}, \bz \rangle),
    \end{align}
    and since the $i_a$ and $j_a$ are all distinct while all off-diagonal entries of $\bM$ are equal, one of the two initial factors in each term will be zero.
    Thus, $W \cong \Psi(W) \subseteq V$, and by counting dimensions $V \cong W$.
\end{proof}

The following result shows that there are no other copies of irreps of this isomorphism class in $\RR[z_1, \dots, z_{n - 1}]_d^{\hom}$.
As a consequence, we will later be able to compute orthogonal projections to $V_{\sH}^{(d)}$ using the isotypic projection.
\begin{proposition}[Multiplicity]
    \label{prop:harmonic-multiplicity}
    $V_{\sH}^{(d)}$ has multiplicity one in $\RR[z_1, \dots, z_{n - 1}]_d^{\hom}$.
\end{proposition}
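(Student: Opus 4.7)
The plan is to reduce the multiplicity computation in $\RR[z_1,\dots,z_{n-1}]^{\hom}_d$ to analogous ones in $\RR[x_1,\dots,x_n]^{\hom}_d$ and $\RR[x_1,\dots,x_n]^{\hom}_{d-1}$, which are amenable to a direct attack with Young's rule. Specifically, the $S_n$-equivariant surjection $\Psi$ from the proof of Proposition~\ref{prop:harmonic-rep}, sending $x_i \mapsto \langle \bv_i, \bz \rangle$, has kernel $(x_1+\cdots+x_n) \cdot \RR[x_1,\dots,x_n]^{\hom}_{d-1}$; since $x_1+\cdots+x_n$ is $S_n$-invariant and multiplication by it is injective on the polynomial ring, that kernel is isomorphic as an $S_n$-module to $\RR[x_1,\dots,x_n]^{\hom}_{d-1}$. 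Taking multiplicities on both sides of the resulting short exact sequence gives
\[
\mathrm{mult}\bigl(W^{(n-d,d)},\,\RR[z_1,\dots,z_{n-1}]^{\hom}_d\bigr) = \mathrm{mult}\bigl(W^{(n-d,d)},\,\RR[x_1,\dots,x_n]^{\hom}_d\bigr) - \mathrm{mult}\bigl(W^{(n-d,d)},\,\RR[x_1,\dots,x_n]^{\hom}_{d-1}\bigr).
\]

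I would then decompose each $\RR[x_1,\dots,x_n]^{\hom}_e$ by $S_n$-orbits of monomials as $\bigoplus_{\tau \vdash e,\,\ell(\tau) \leq n} M^\tau$, where $M^\tau = \Ind_{H_\tau}^{S_n} \mathbf{1}$ and $H_\tau = \prod_{j \geq 0} S_{n_\tau(j)}$ is the stabilizer of a representative monomial---with $n_\tau(j)$ the number of indeterminates raised to the $j$th power and $n_\tau(0) = n - \ell(\tau)$. By Frobenius reciprocity and Young's rule, the multiplicity of $W^{(n-d,d)}$ in $M^\tau$ equals the Kostka number counting semistandard Young tableaux of shape $(n-d,d)$ with content $(n_\tau(0),n_\tau(1),\dots)$.

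The key combinatorial observation is that in any such tableau the symbol $0$ can never appear in row $2$, for the column strict-increase condition would then require a row-$1$ entry strictly below $0$; hence row $2$ is filled with $d$ positive symbols, forcing $\ell(\tau) = \sum_{j \geq 1} n_\tau(j) \geq d$. For $\tau \vdash d-1$ we have $\ell(\tau) \leq d-1 < d$, so no such $\tau$ contributes and the multiplicity in $\RR[x_1,\dots,x_n]^{\hom}_{d-1}$ vanishes. For $\tau \vdash d$ the inequalities $d \leq \ell(\tau) \leq d$ force $\tau = (1^d)$, whose content is $(n-d,d,0,0,\dots)$, and the only tableau has all zeros in row $1$ and all ones in row $2$; hence the multiplicity in $\RR[x_1,\dots,x_n]^{\hom}_d$ equals $1$. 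Subtracting gives $1-0=1$, as needed.

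The main obstacle is conceptual rather than technical: one must set up the orbit decomposition with the zero-exponent variables treated explicitly, so that they supply the ``$0$'' symbol in the tableau content---it is precisely this symbol that, through the pigeonhole argument above, isolates $\tau = (1^d)$ and collapses the computation. Everything else reduces to standard applications of Young's rule and the column-strict condition for two-row tableaux.
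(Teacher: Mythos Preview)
Your proof is correct and follows essentially the same line as the paper's: decompose $\RR[x_1,\dots,x_n]^{\hom}_e$ into $S_n$-orbits of monomials (indexed by exponent partitions $\tau$) and apply Young's rule, observing that $W^{(n-d,d)}$ can only appear when the partition has exactly $d$ parts, i.e., $\tau=(1^d)$. Your tableau argument that the symbol~$0$ cannot sit in row~$2$ is exactly the dominance condition the paper invokes.

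The one organizational difference is that the paper skips your short exact sequence entirely: since $\RR[z_1,\dots,z_{n-1}]^{\hom}_d$ is a quotient (hence, in characteristic~$0$, a summand) of $\RR[x_1,\dots,x_n]^{\hom}_d$, it suffices to show the multiplicity is already~$1$ in the larger space; the lower bound of~$1$ comes for free from Proposition~\ref{prop:harmonic-rep}. Your detour through degree $d-1$ to get the exact equality $1-0=1$ is not wrong, just unnecessary---the degree-$(d-1)$ computation is the same Young's-rule argument applied where no $\tau$ can have $d$ parts.
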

\begin{proof}
    We show the stronger statement that one is the multiplicity of $V_{\sH}^{(d)}$ in $\RR[x_1, \dots, x_n]^{\hom}_d$, which contains a copy of $\RR[z_1, \dots, z_{n - 1}]_d^{\hom}$ as the quotient by the ideal generated by $x_1 + \cdots + x_n$.
    We use that $\RR[x_1, \dots, x_n]_d^{\hom}$ admits a decomposition into invariant subspaces $\widetilde{U}^{\tau}$ over $\tau \in \Part(d)$, $\RR[x_1, \dots, x_n]_d^{\hom} = \bigoplus_{\tau \in \Part(d)} \widetilde{U}^{\tau}$, where $\widetilde{U}^{\tau}$ consists of polynomials whose monomials have their set of exponents equal to the numbers appearing in $\tau$.

    We claim that each $\widetilde{U}^{\tau}$ with $\tau = (\tau_1, \dots, \tau_m)$ is isomorphic to the combinatorial representation $U^{(n - m, f_1, \dots, f_{\ell})}$, where the $f_i$ give, in descending order, the frequencies of numbers appearing among the $\tau_i$. This is because $\widetilde{U}^{\tau}$ is the space of polynomials where each monomial contains $m$ variables with the collection of exponents equal to $\tau$, while $U^{\tau}$ is the same but with the collection of exponents equal to some $\tau^{\prime}$.
    While $\tau$ and $\tau^{\prime}$ may be different (indeed, $\tau^{\prime}$ may be a partition of some number other than $d$ into $m$ parts), the associated partition of $m$ given by the frequency of each number in either $\tau$ or $\tau^{\prime}$ is the same, namely equal to $(f_1, \dots, f_{\ell})$.
    The isomorphism type of the associated representation only depends on this ancillary partition, so $\widetilde{U}^{\tau} \cong U^{(n - m, f_1, \dots, f_{\ell})}$.

    By Young's rule, among these combinatorial representations, only $\widetilde{U}^{(1, \dots, 1)} \cong U^{(n - d, d)}$ contains a copy of $V_{\sH}^{(d)} \cong W^{(n - d, d)}$ (as, by the majorization condition, for this to happen we must have $m = d$), and it contains exactly one copy of $V_{\sH}^{(d)}$.
    We note that in this case the isomorphism $\widetilde{U}^{(1, \dots, 1)} \cong U^{(n - d, d)}$ is immediate, since both representations are just the space of multilinear polynomials of degree $d$.
\end{proof}

\section{Proof of the Grigoriev-Laurent Lower Bound}
\label{sec:laurent-pf}

Proposition~\ref{prop:hypercube-rep} gives us a means of showing that $\bY^{(n)} \succeq \bm 0$, giving a proof of Theorem~\ref{thm:laurent-pm}.
Viewing $\bY^{(n)}$ as operating on $\RR[\{\pm 1\}^n]$, since $\bY^{(n)}$ commutes with the action of $S_n$, by Schur's lemma it acts as a scalar on each irreducible subrepresentation of $\RR[\{\pm 1\}^n]$.
Since the ideal generated by $\sum_{i = 1}^n x_i$ is in the kernel of $\bY^{(n)}$, by Proposition~\ref{prop:hypercube-rep} the only possible such irreducible subrepresentations on which $\bY^{(n)}$ has a non-zero eigenvalue are $W^{(n, 0)}, W^{(n - 1, 1)}, \dots, W^{(n - d_{\max}, d_{\max})}$.
Thus it suffices to choose a non-zero element of each isotypic component, $p_i \in \bigoplus_{k = 0}^{n - 2d} \left(\sum_{i = 1}^n x_i\right)^k W^{(n - i, i)}$, and verify that $\tEE[p_i(\bx)^2] > 0$ for each $i$.

To identify such polynomials, we compute the isotypic projections of monomials.
\begin{definition}[Isotypic projection]
    \label{def:isotypic-proj}
    For each $S \in \binom{[n]}{d}$, define $h_S \in \RR[x_1, \dots, x_n]_d^{\hom}$ by
    \begin{equation}
        h_S(\bx) = \frac{\binom{n}{d} - \binom{n}{d - 1}}{n!} \sum_{\pi \in S_n} \chi_{(n - d, d)}(\pi) \bx^{\pi(S)},
    \end{equation}
\end{definition}
\noindent
Then, by Proposition~\ref{prop:isotypic}, we have that $h_S(\bx) \in \bigoplus_{k = 0}^{n - 2d} \left(\sum_{i = 1}^n x_i\right)^k W^{(n - |S|, |S|)}$, the isotypic component of $W^{(n - |S|, |S|)}$ in $\RR[\{\pm 1\}^n]$.

\begin{proposition}
    \label{prop:laurent-hS-norm}
    For any $S \in \binom{[n]}{d}$,
    \begin{equation}
        \tEE[h_S(\bx)^2] = \frac{n - 2d + 1}{n - d + 1} \prod_{i = 0}^{d - 1} \frac{n - 2i}{n - 2i - 1} > 0.
    \end{equation}
\end{proposition}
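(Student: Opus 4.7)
The plan is to exploit the observation that the bilinear form $(p, q) \mapsto \tEE[p(\bx) q(\bx)]$ is $S_n$-invariant. Indeed, using condition 3 of Definition~\ref{def:pe} to reduce $x_i^2 \equiv 1$, we have $\tEE[\bx^A \bx^B] = a_{|A \triangle B|}$, which is manifestly invariant under simultaneous permutation of $A$ and $B$. Hence the isotypic projection $\bP \colon \bx^S \mapsto h_S$ is self-adjoint with respect to this (possibly degenerate) bilinear form, by essentially the same averaging computation as in Proposition~\ref{prop:isotypic} together with the fact that all $S_n$-characters are real-valued. Combined with idempotency $\bP^2 = \bP$, this yields
\[
\tEE[h_S(\bx)^2] = \tEE[(\bP \bx^S) \cdot (\bP \bx^S)] = \tEE[(\bP^2 \bx^S) \cdot \bx^S] = \tEE[h_S(\bx) \, \bx^S],
\]
collapsing the squared norm into a single character sum rather than a double one.

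Next I would substitute Definition~\ref{def:isotypic-proj} to obtain
\[
\tEE[h_S(\bx)^2] = \frac{\binom{n}{d} - \binom{n}{d-1}}{n!} \sum_{\pi \in S_n} \chi_{(n-d,d)}(\pi) \, a_{|S \triangle \pi(S)|},
\]
and partition the sum by $k \colonequals |S \cap \pi(S)|$, so that $|S \triangle \pi(S)| = 2(d-k)$. Applying Lemma~\ref{lem:restricted-char-sums} with $A = B = S$ and $a = b = d$ evaluates the inner sum at fixed $k$ to $(-1)^{k+d}\binom{d}{k} \, n! / \binom{n}{d}$. After substituting $j = d-k$, using $a_{2j} = (-1)^j \prod_{i=0}^{j-1}(2i+1)/(n-2i-1)$ from \eqref{eq:as}, and simplifying the prefactor via $(\binom{n}{d} - \binom{n}{d-1})/\binom{n}{d} = (n-2d+1)/(n-d+1)$, I arrive at
\[
\tEE[h_S(\bx)^2] = \frac{n - 2d + 1}{n - d + 1} \sum_{j=0}^d \binom{d}{j} \prod_{i=0}^{j-1} \frac{2i+1}{n-2i-1}.
\]

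Finally I would evaluate the remaining sum as $\prod_{i=0}^{d-1}(n-2i)/(n-2i-1)$. Rewriting the summand in Pochhammer form as $(-d)_j (1/2)_j / \big(((1-n)/2)_j \, j!\big)$ identifies the sum as the terminating hypergeometric series ${}_2F_1(-d, 1/2; (1-n)/2; 1)$, which the Chu--Vandermonde identity evaluates to $(-n/2)_d / ((1-n)/2)_d$; unpacking these Pochhammer symbols produces exactly the claimed product, and positivity is then immediate from $n \geq 2d$. The conceptual crux is really the first step --- recognizing $S_n$-invariance of $\tEE$ and using it, together with self-adjointness of the isotypic projection, to collapse the squared norm into a single character sum; the Chu--Vandermonde evaluation is then the main technical hurdle, though a direct induction on $d$ is also available as an alternative.
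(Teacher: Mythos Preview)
Your proof is correct and follows essentially the same route as the paper: reduce $\tEE[h_S^2]$ to $\tEE[h_S \cdot \bx^S]$, expand via the definition of $h_S$, group by $|S \cap \pi(S)|$, and apply Lemma~\ref{lem:restricted-char-sums} to reach the sum $\frac{n-2d+1}{n-d+1}\sum_j \binom{d}{j}\prod_{i=0}^{j-1}\frac{2i+1}{n-2i-1}$. The only real divergence is in evaluating this last sum: the paper carries out an explicit finite-difference induction (proving a formula for $\Delta^a f(k)$ and then specializing), whereas you recognize it as a terminating ${}_2F_1$ and invoke Chu--Vandermonde---and you already flag the inductive alternative yourself.
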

\begin{proof}
    First, since $h_S(\bx)$ is the sum of the projection of $\bx^S$ to one of the eigenspaces of $\tEE$ and an element of the kernel of $\tEE$, we have
    \begin{align}
        \tEE[h_S(\bx)^2]
        &= \tEE[h_S(\bx)\bx^S] \nonumber
        \intertext{and from here we may compute directly,}
        &= \frac{\binom{n}{d} - \binom{n}{d - 1}}{n!} \sum_{k = 0}^{d} a_{2d - 2k} \sum_{\substack{\pi \in S_n \\ |\pi(S) \cap S| = k}} \chi_{(n - d, d)}(\pi) \nonumber \\
        &= \frac{\binom{n}{d} - \binom{n}{d - 1}}{\binom{n}{d}} \sum_{k = 0}^{d} (-1)^{d - k} \binom{d}{k} a_{2d - 2k} \tag{Lemma~\ref{lem:restricted-char-sums}} \\
        &= \frac{n - 2d + 1}{n - d + 1}\sum_{k = 0}^{d} (-1)^{k} \binom{d}{k} a_{2k}.
    \end{align}
    It remains to analyze the sum.
    We view such a sum as a $d$th order finite difference, in this case a forward finite difference of the sequence $f(k) = a_{2k}$ for $k = 0, \dots, d$.
    Let us write $\Delta^a f$ for the sequence that is the $a$th forward finite difference.
    We will show by induction that
    \begin{equation}
        \Delta^a f(k) = a_{2k} \prod_{i = 0}^{a - 1}\frac{n - 2i}{n - 2k - 2i - 1}.
    \end{equation}
    Clearly this holds for $a = 0$.
    If the result holds for $a - 1$, then we have
    \begin{align}
        \Delta^a f(k)
        &= \Delta^{a - 1} f(k) - \Delta^{a - 1} f(k + 1) \nonumber \\
        &= a_{2k} \prod_{i = 0}^{a - 2}\frac{n - 2i}{n - 2k - 2i - 1} - a_{2k + 2} \prod_{i = 0}^{a - 2}\frac{n - 2i}{n - 2k - 2i - 3} \nonumber \\
        &= a_{2k} \prod_{i = 0}^{a - 2}\frac{n - 2i}{n - 2k - 2i - 1} + a_{2k} \frac{2k + 1}{n - 2k - 1} \prod_{i = 0}^{a - 2}\frac{n - 2i}{n - 2k - 2i - 3} \nonumber \\
        &= a_{2k} \prod_{i = 0}^{a - 2}\frac{n - 2i}{n - 2k - 2i - 1} \left(1 + \frac{2k + 1}{n - 2k - 1} \cdot \frac{n - 2k - 1}{n - 2k - 2a + 1}\right) \nonumber \\
        &= a_{2k}\prod_{i = 0}^{a - 2}\frac{n - 2i}{n - 2k - 2i - 1} \cdot \frac{n - 2(a - 1)}{n - 2k - 2(a - 1) - 1},
    \end{align}
    completing the induction.
    Evaluating at $a = d$ then gives
    \begin{equation}
        \sum_{k = 0}^d (-1)^{k} \binom{d}{k} a_{2k} = \Delta^d f(0) = \prod_{i = 0}^{d - 1} \frac{n - 2i}{n - 2i - 1},
    \end{equation}
    completing the proof.
\end{proof}

It is then straightforward to check that, together with some representation-theoretic reasoning, this implies the Grigoriev-Laurent lower bound.
\begin{proof}[Proof of Theorem~\ref{thm:laurent-pm}]
    Let $p(\bx) \in \RR[x_1, \dots, x_n]$.
    By Proposition~\ref{prop:hypercube-rep}, there exist $h_{d, k} \in W^{(n - d, d)}$ for $d \in \{0, \dots, d_{\max}\}$ and $k \in \{0, \dots, n - 2d + 1\}$ such that
    \begin{equation}
        p(\bx) = \sum_{d = 0}^{d_{\max}} \sum_{k = 0}^{n - 2d} \left(\sum_{i = 1}^n x_i\right)^k h_{d, k}(\bx).
    \end{equation}
    Since $\tEE$ is zero on multiples of $\sum_{i = 1}^n x_i$, its pseudomoment matrix $\bY^{(n)}$ acts as a scalar on each of the $W^{(n - d, d)}$ by Schur's lemma, and $h_{d, 0}$ for different $d$ have different degrees and thus orthogonal vectors of coefficients, we have
    \begin{equation}
        \tEE[p(\bx)^2] = \tEE\left[\left(\sum_{d = 0}^{d_{\max}} h_{d, 0}(\bx)\right)^2\right] = \sum_{d = 0}^{d_{\max}} \tEE[h_{d, 0}(\bx)^2] \geq 0
    \end{equation}
    by Proposition~\ref{prop:laurent-hS-norm}, completing the proof.
\end{proof}

\section{Pseudomoment Spectrum}

We now would like to recover the actual eigenvalues of $\bY^{(n)}$.
It may seem that we are close to obtaining the eigenvalues: since the $W^{(n - d, d)}$ are the eigenspaces of $\tEE$, it suffices to just find any concrete polynomial $p \in W^{(n - d, d)}$ that it is convenient to compute with, whereupon we will have $\lambda_{n, d} = \tEE[p(x)^2] / \|p\|^2$, where the norm of a polynomial is the norm of the vector of coefficients (not the apolar norm).
However, our computation above does not quite achieve this: crucially, $h_S(\bx)$ does \emph{not} belong to $W^{(n - d, d)}$; rather, it equals the projection of $\bx^S$ to \emph{all} copies of this irrep in $\RR[\{\pm 1\}^n]$, of which there are $n - 2d + 1$.
Since those copies that are divisible by $\sum_{i = 1}^n x_i$ are in the kernel of $\tEE$, we have actually computed $\tEE[h_S(\bx)^2] = \tEE[\what{h}_S(\bx)^2]$ where $\what{h}_S(\bx) \in W^{(n - d, d)}$ is the relevant component of $\bx^S$.
However, not having an explicit description of $\what{h}_S(\bx)$, we have no immediate way to compute $\|\what{h}_S\|^2$.

\begin{remark}
    One possible approach to implement this direct strategy is to try to take $p \in W^{(n - d,d)}$ to be one of the basis polynomials given in Definition~\ref{def:specht}.
    However, computing the pseudoexpectation of the square of such a polynomial gives an unusual combinatorial sum to which the character-theoretic tools we have developed do not seem to apply.
\end{remark}

Instead, we will use use the expression of $\bY^{(n)}$ as a Gram matrix offered by spectral extension, in particular verifying a description of the form \eqref{eq:tEE-prediction-decomp-laurent-intro} suggested above.

\subsection{Verifying Spectral Extension Characterization}

We now establish that $\tEE$ is an instance of spectral extension.
We first show that the basis of $h_S(\bx)$ achieves a block diagonalization of $\tEE$.

\begin{lemma}[Block diagonalization]
    \label{lem:block-diag-laurent}
    $\tEE[h_S(\bx)h_T(\bx)] = 0$ if $|S| \neq |T|$.
    If $S, T \in \binom{[n]}{d}$, then
    \begin{equation}
        \tEE[h_S(\bx)h_T(\bx)] = \sigma_d^2 \cdot \bigg \langle h_S(\langle \bv_1, \bz \rangle, \dots, \langle \bv_n, \bz \rangle), h_T(\langle \bv_1, \bz \rangle, \dots, \langle \bv_n, \bz \rangle) \bigg \rangle_{\circ}
    \end{equation}
    where
    \begin{equation}
        \sigma_d^2 = d! \, \left(\frac{n - 1}{n}\right)^d \, \prod_{i = 0}^{d - 1} \frac{n - 2i}{n - 2i - 1} > 0.
    \end{equation}
\end{lemma}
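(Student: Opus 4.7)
The plan is to view the two bilinear forms
\[
B_1(p,q) \colonequals \tEE[p(\bx)q(\bx)], \qquad B_2(p,q) \colonequals \bigl\langle \Psi(p), \Psi(q) \bigr\rangle_{\circ}
\]
on $\RR[x_1,\dots,x_n]^{\hom}_d$ as $S_n$-invariant, use Schur's lemma to conclude they agree up to a single scalar on the Specht module $W^{(n-d,d)}$, and then evaluate both sides at one convenient pair to pin the scalar down. The cross-degree vanishing is nearly automatic: for $|S|\neq|T|$ the polynomials $h_S$ and $h_T$ are isotypic projections of $\bx^S$ and $\bx^T$ into non-isomorphic irreps $W^{(n-|S|,|S|)}$ and $W^{(n-|T|,|T|)}$, and since $\tEE$ is $S_n$-invariant, Schur forces $\tEE[h_S h_T]=0$.

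For equal sizes $|S|=|T|=d$, the proof of Proposition~\ref{prop:harmonic-multiplicity} shows that $W^{(n-d,d)}$ occurs with multiplicity exactly one inside $\RR[x_1,\dots,x_n]^{\hom}_d$, and Proposition~\ref{prop:harmonic-rep} shows that $\Psi$ restricts to an $S_n$-equivariant isomorphism from this copy onto $V_{\sH}^{(d)}$. Since $\bx^S$ is multilinear, $h_S$ lies in this unique copy. Both $B_1$ and $B_2$ are $S_n$-invariant: $B_2$ because the $S_n$-action on $\RR^{n-1}$ by permuting the $\bv_i$ is orthogonal (it preserves $\bM \colonequals \Gram(\bv_1,\dots,\bv_n)$) and the apolar inner product is preserved by orthogonal change of variables. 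Schur's lemma applied to the irrep $W^{(n-d,d)}$ then yields $B_1|_{W^{(n-d,d)}} = \sigma_d^2 \cdot B_2|_{W^{(n-d,d)}}$ for some scalar $\sigma_d^2$.

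To compute $\sigma_d^2$ I would evaluate both forms at $(h_S,h_S)$. On the $\tEE$ side, Proposition~\ref{prop:laurent-hS-norm} is a closed form. On the apolar side I would first observe that $\Psi(\bx^S - h_S)$ lies in the $\Psi$-images of the other isotypic components of $\RR[x_1,\dots,x_n]^{\hom}_d$, which by Proposition~\ref{prop:harmonic-multiplicity} are orthogonal to $V_{\sH}^{(d)}$ under the (also $S_n$-invariant) apolar inner product; hence $B_2(h_S,h_S) = \langle \Psi(h_S), \Psi(\bx^S)\rangle_{\circ}$. Expanding $\Psi(h_S)$ via Definition~\ref{def:isotypic-proj} and invoking the classical permanent formula
\[
\biggl\langle \prod_{i \in A}\langle \bv_i, \bz\rangle,\, \prod_{j \in B}\langle \bv_j, \bz\rangle\biggr\rangle_{\circ} = \frac{1}{d!}\,\per(\bM_{A,B})
\]
for products of linear forms, the dependence on $\pi$ collapses to a dependence on $|\pi(S)\cap S|$ alone. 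Lemma~\ref{lem:restricted-char-sums} with $A=B=S$ then handles the character sum, and expanding $\per(\bM_{A,B})$ in powers of $-1/(n-1)$ (using $\bM = \tfrac{n}{n-1}\bI_n - \tfrac{1}{n-1}\one\one^{\top}$) produces a double sum in which an alternating binomial identity of the form $\sum_{k}(-1)^{k+d}\binom{d}{k}\binom{k}{c} = (-1)^{c+d}\binom{d}{c}(1-1)^{d-c}$ annihilates every term except $c=d$, delivering the claimed closed form for $\sigma_d^2$. I expect the main obstacle to be arranging this last double summation so the cancellation is manifest; once one commits to replacing $\Psi(h_S)$ by $\Psi(\bx^S)$ in the second slot and then expanding via the permanent, the remainder is bookkeeping driven by Lemma~\ref{lem:restricted-char-sums}.
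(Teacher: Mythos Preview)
Your proposal is correct and matches the paper's approach in its overall structure: both arguments use Schur's lemma on the unique copy of $W^{(n-d,d)}$ to reduce the identity to a single scalar, then pin down $\sigma_d^2$ by evaluating at $(h_S,h_S)$ and replacing one factor $h_S$ by $\bx^S$ via the projection property. The cross-degree vanishing and the use of Lemma~\ref{lem:restricted-char-sums} to collapse the character sum are handled identically.

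The only genuine difference is in how the remaining alternating sum $\sum_{k=0}^d (-1)^{k}\binom{d}{k}\beta_{d,k}$ is evaluated. The paper treats it as a $d$th forward difference and proves by induction a closed form for $\Delta^a f(k)$ in terms of apolar inner products of products of linear forms $\langle \bv_i - \bv_j,\bz\rangle$; evaluating at $a=d$, $k=0$ then leaves a single monomial contribution. Your route instead expands $\beta_{d,k}=\tfrac{1}{d!}\per(\bM_{A,B})$ using $\bM=\tfrac{n}{n-1}\bI-\tfrac{1}{n-1}\one\one^\top$ to obtain $\beta_{d,k}=\tfrac{1}{d!}\sum_{c=0}^k\binom{k}{c}(d-c)!\bigl(\tfrac{-1}{n-1}\bigr)^{d-c}\bigl(\tfrac{n}{n-1}\bigr)^{c}$, swaps the order of summation, and kills all terms with $c<d$ via $\sum_k(-1)^k\binom{d}{k}\binom{k}{c}=\binom{d}{c}(-1)^c(1-1)^{d-c}$. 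This is more mechanical and avoids the inductive insight; the paper's finite-difference argument is slicker and stays entirely within the apolar-inner-product formalism without ever writing down the permanent. Either computation lands on $\|h_S(\bV^\top\bz)\|_\circ^2 = \tfrac{\binom{n}{d}-\binom{n}{d-1}}{\binom{n}{d}}\cdot\tfrac{1}{d!}\bigl(\tfrac{n}{n-1}\bigr)^d$ and hence the stated $\sigma_d^2$.
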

\begin{proof}
    For the sake of brevity, let $\bV$ be the matrix whose columns are the $\bv_i$, so that $\bV^{\top}\bz$ has entries $\langle \bv_i, \bz \rangle$.
    The first claim follows since if $|S| \neq |T|$ then $h_S(\bx)$ and $h_T(\bx)$ belong to orthogonal eigenspaces of $\tEE$.
    For the second claim, recall that $\Psi: \RR[x_1, \dots, x_n]_d^{\hom} \to \RR[z_1, \dots, z_{n - 1}]_d^{\hom}$ as defined in Proposition~\ref{prop:harmonic-rep} is an isomorphism on each eigenspace with non-zero eigenvalue of $\tEE$.
    Moreover, by Proposition~\ref{prop:harmonic-rep}, each such eigenspace is isomorphic to some $V_{\sH}^{(d)}$ and thus is irreducible.
    So, since the apolar inner product in $\RR[z_1, \dots, z_{n - 1}]_d^{\hom}$ is invariant under the action of $S_n$ (permuting the $\langle \bv_i, \bz \rangle$) and $\Psi(h_S(\bx)) = h_S(\bV^{\top}\bz)$, the result must hold with \emph{some} $\sigma_d^2 \geq 0$ (which must be non-negative by the positivity of $\tEE$).

    It remains to compute $\sigma_d^2$, which is
    \begin{equation}
        \sigma_d^2 = \frac{\tEE[h_S(\bx)^2]}{\|h_S(\bV^{\top}\bz)\|_{\circ}^2}
    \end{equation}
    for any $S \in \binom{[n]}{d}$.
    We computed the numerator in Proposition~\ref{prop:laurent-hS-norm}, so we need only compute the denominator.

    Define, for $0 \leq k \leq d$, $\beta_{d, k} \colonequals \langle (\bV^{\top} \bz)^S, (\bV^{\top} \bz)^T \rangle_{\circ}$ for any $S, T \in \binom{[n]}{d}$ with $|S \cap T| = k$ (as this value only depends on $|S \cap T|$).
    With this notation, since $h_S(\bV^{\top}\bz)$ is the apolar projection of $(\bV^{\top}\bz)^S$ to $V_{\sH}^{(d)}$ (as it is by definition the isotypic projection and by Proposition~\ref{prop:harmonic-multiplicity} $V_{\sH}^{(d)}$ has multiplicity one in $\RR[z_1, \dots, z_{n - 1}]_d^{\hom}$),
    \begin{align}
        \|h_S(\bV^{\top}\bz)\|_{\circ}^2
        &= \big \langle h_S(\bV^{\top}\bz), (\bV^{\top}\bz)^S \big \rangle_{\circ} \nonumber \\
        &= \frac{\binom{n}{d} - \binom{n}{d - 1}}{n!} \sum_{k = 0}^d \beta_{d, k} \sum_{\substack{\pi \in S_n \\ |\pi(S) \cap S| = k}} \chi_{(n - d, d)}(\pi) \nonumber \\
        &= (-1)^{d}\binom{d}{|S \cap T|}\frac{\binom{n}{d} - \binom{n}{d - 1}}{\binom{n}{d}} \sum_{k = 0}^{d} (-1)^k \binom{d}{k} \beta_{d, k}, \tag{Lemma~\ref{lem:restricted-char-sums}}
    \end{align}
    and we are left with a similar sum as in Proposition~\ref{prop:laurent-hS-norm}, but now a $d$th forward difference of the sequence $f(k) = \beta_{d, k}$.
    We note that, choosing a concrete $S$ and $T$ in the definition, we may write
    \begin{equation}
        \beta_{d, k} = \left\langle \prod_{i = 1}^d \langle v_i, z \rangle, \prod_{i = 1}^k \langle v_i, z \rangle \prod_{i = d + 1}^{2d - k} \langle v_i, z\rangle \right\rangle_{\circ}.
    \end{equation}
    Using this representation, it is straightforward to show, again by induction, that
    \begin{equation}
        \Delta^a f(k) = \left\langle \prod_{i = 1}^d \langle v_i, z \rangle, \prod_{i = 1}^k \langle v_i, z \rangle \prod_{i = d + a + 1}^{2d - k} \langle v_i, z\rangle \prod_{j = 1}^a \langle v_{d + j} - v_{k + j}, z \rangle \right\rangle_{\circ}.
    \end{equation}
    Therefore, we have
    \begin{align}
        \sum_{k = 0}^{d} (-1)^k \binom{d}{k} \beta_{d, k}
        &= \Delta^a f(0) \nonumber \\
        &= \left\langle \prod_{i = 1}^d \langle v_i, z \rangle, \prod_{i = 1}^d \langle v_i - v_{d + i}, z \rangle \right\rangle_{\circ} \nonumber
        \intertext{where the only contribution applying the product rule to the inner product is in the matching of the two products in their given order, whereby}
        &= \frac{1}{d!} \left(-1 - \frac{1}{n - 1}\right)^{d} \nonumber \\
        &= \frac{(-1)^d}{d!} \left(\frac{n}{n - 1}\right)^d,
    \end{align}
    and substituting completes the proof.
\end{proof}

The following then follows immediately since the $h_S(\bx)$ with $|S| = d$ are a spanning set of the isotypic component of $W^{(n - d, d)}$ in $\RR[\{\pm 1\}^n]$.
\begin{corollary}[Gram matrix expression]
    \label{cor:gram-mx-laurent}
    Let $h_{S, T} \in V_{\sH}^{(|S| - 2|T|)}$ be such that
    \begin{equation}
        \prod_{i \in S} \langle \bv_i, \bz \rangle = \sum_{k = 0}^{\lfloor |S| / 2 \rfloor} \sum_{T \in \sM_{k}([n])} h_{S, T}(\langle \bv_1, \bz \rangle, \dots, \langle \bv_n, \bz \rangle) \prod_{j \in T} \langle \bv_j, \bz \rangle^2,
    \end{equation}
    Define
    \begin{equation}
        h_{S, k}(\bx) \colonequals \sum_{T \in \sM_{(|S| - k) / 2}([n])} h_{S, T}(\bx) \in \RR[x_1, \dots, x_n]_k^{\hom}
    \end{equation}
    if $k \leq |S|$ and $k$ and $|S|$ are of equal parity, and $h_{S, k} = 0$ otherwise.
    Then,
    \begin{equation}
        \label{eq:tEE-prediction-decomp-laurent}
        \tEE[\bx^S\bx^T] = \sum_{d = 0}^{|S| \wedge |T|} \sigma_d^2 \cdot \bigg \langle h_{S, d}(\langle \bv_1, \bz \rangle, \dots, \langle \bv_n, \bz \rangle), h _{T,d}(\langle \bv_1, \bz \rangle, \dots, \langle \bv_n, \bz \rangle) \bigg\rangle_{\circ}.
    \end{equation}
\end{corollary}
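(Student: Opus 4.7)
The plan is to verify \eqref{eq:tEE-prediction-decomp-laurent} by lifting the Fischer decomposition in the statement back to the $\bx$ variables and then pairing with $\bx^T$ under $\tEE$. First I would observe that $\Psi$ extends to a surjective ring homomorphism $\RR[x_1, \ldots, x_n] \onto \RR[z_1, \ldots, z_{n - 1}]$ whose kernel is exactly the ideal generated by $\sum_{i = 1}^n x_i$, using $\sum_i \bv_i = \zero$ together with the fact that any $n - 1$ of the $\bv_i$ form a basis of $\RR^{n - 1}$. Since $\Psi\bigl(h_{S, T}(\bx) \prod_{j \in T} x_j^2\bigr) = h_{S, T}(\langle \bv_1, \bz\rangle, \ldots, \langle \bv_n, \bz\rangle) \prod_{j \in T} \langle \bv_j, \bz\rangle^2$, the Fischer decomposition displayed in the statement immediately gives the congruence
\begin{equation}
    \bx^S \equiv \sum_{k = 0}^{\lfloor |S|/2 \rfloor} \sum_{T \in \sM_k([n])} h_{S, T}(\bx) \prod_{j \in T} x_j^2 \pmod{\bigl\langle \textstyle\sum_{i = 1}^n x_i \bigr\rangle}.
\end{equation}

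Next I would use the two defining properties of $\tEE$ beyond positivity. Since $\tEE$ strongly satisfies $\sum x_i = 0$ (see Section \ref{sec:prelim-pseudomoments}) and enforces $x_j^2 = 1$ for each $j$, substituting the above congruence for $\bx^S$ and its analogue for $\bx^T$ into $\tEE[\bx^S \bx^T]$ and regrouping the resulting double sum over $T_1, T_2$ by the common degree of $h_{S, T_1} \cdot h_{T, T_2}$ collapses everything to
\begin{equation}
    \tEE[\bx^S \bx^T] = \sum_{d, d'} \tEE[h_{S, d}(\bx) h_{T, d'}(\bx)],
\end{equation}
with $d$ running over integers of the same parity as $|S|$ between $0$ and $|S|$, and similarly for $d'$ and $|T|$.

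It remains to evaluate the surviving terms via representation theory. The bilinear form $B(p, q) \colonequals \tEE[p(\bx) q(\bx)]$ is $S_n$-invariant and vanishes whenever either argument lies in $\langle \sum x_i\rangle$, so via $\Psi$ it descends to an $S_n$-invariant bilinear form $\widetilde B$ on $\RR[z_1, \ldots, z_{n - 1}]$. For $d \neq d'$, the restriction of $\widetilde B$ to $V_{\sH}^{(d)} \times V_{\sH}^{(d')}$ is an intertwiner between the non-isomorphic irreducibles $W^{(n-d,d)}$ and $W^{(n-d',d')}$ (Proposition \ref{prop:harmonic-rep}), hence vanishes by Schur's lemma, killing all cross terms. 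For $d = d'$, both $\widetilde B$ and $\sigma_d^2 \langle \cdot, \cdot \rangle_\circ$ restrict to $S_n$-invariant bilinear forms on the irreducible module $V_{\sH}^{(d)}$, which appears with multiplicity one in $\RR[z_1, \ldots, z_{n-1}]^{\hom}_d$ by Proposition \ref{prop:harmonic-multiplicity}; they are therefore proportional by Schur's lemma, and Lemma \ref{lem:block-diag-laurent} applied to $h_S(\bx), h_T(\bx)$ for any $|S| = |T| = d$ pins the proportionality constant at $1$. The step I expect to require the most care is the descent of $B$ through $\ker \Psi$: checking that both the pseudoexpectation pairing and the apolar pairing genuinely land on the same irreducible $S_n$-module so that Schur's lemma can be invoked uniformly, and that $S_n$-invariance of the apolar form under the permutation action on the $\langle \bv_i, \bz\rangle$ is indeed inherited from orthogonality of the standard representation. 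Everything else is bookkeeping around the Fischer decomposition.
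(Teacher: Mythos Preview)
Your argument is correct and is essentially the same as the paper's, just made fully explicit: the paper deduces the corollary in one line from Lemma~\ref{lem:block-diag-laurent} by noting that the $h_S(\bx)$ with $|S|=d$ span the isotypic component, while you spell out the lifting of the Fischer decomposition modulo $\langle\sum_i x_i\rangle$, the reduction via $x_j^2=1$ and strong satisfaction, and the Schur step on $V_{\sH}^{(d)}$ after descending through $\Psi$. Your choice to descend the bilinear form to the $z$-variables and invoke the multiplicity-one statement (Proposition~\ref{prop:harmonic-multiplicity}) is a clean way to phrase what the paper leaves implicit in ``span the isotypic component.''
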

\noindent
We note that \eqref{eq:tEE-prediction-decomp-laurent} is just \eqref{eq:tEE-prediction-decomp-laurent-intro} repeated verbatim.

\subsection{Iterated Schur Complements in Pseudomoment Matrix}
\label{sec:schur-complement}

As promised in the introduction, let us revisit Laurent's proposal of an alternate proof technique by taking iterated Schur complements in $\bY^{(n)}$, as suggested in the appendix of \cite{Laurent-2003-CutPolytopeSOS} under ``A tentative iterative proof.''
First, let us give a general description of the Schur complement operation on Gram matrices---though elementary, we have not encountered this observation in the literature and it may be of independent interest.

\begin{proposition}[Gramian interpretation of Schur complement]
    \label{prop:gramian-schur-complement}
    Suppose that $\ba_1, \dots, \ba_m,$ $\bb_1, \dots, \bb_n \in \RR^r$.
    Write $\bM \colonequals \Gram(\ba_1, \dots, \ba_m, \bb_1, \dots, \bb_n)$ as a block matrix with block sizes $m$ and $n$ along each axis:
    \begin{equation}
        \bM \equalscolon \left[\begin{array}{cc} \bM^{[1, 1]} & \bM^{[1, 2]} \\ \bM^{[2, 1]} & \bM^{[2, 2]} \end{array}\right].
    \end{equation}
    Write $\bP$ for the orthogonal projection to the orthogonal complement of the span of the $\ba_1, \dots, \ba_m$.
    Then,
    \begin{equation}
        \bM^{[2, 2]} - \bM^{[2, 1]}\bM^{[1, 1]^{\dagger}}\bM^{[1, 2]} = \Gram(\bP \bb_1, \dots, \bP \bb_n),
    \end{equation}
    where $\dagger$ denotes the Moore-Penrose matrix pseudoinverse.
\end{proposition}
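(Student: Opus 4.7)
The plan is to translate the claim into a single matrix identity and verify it via the characterizing properties of the Moore--Penrose pseudoinverse. Collect the $\ba_i$ and $\bb_j$ as columns of matrices $\bA \in \RR^{r \times m}$ and $\bB \in \RR^{r \times n}$, so that the Gram matrix splits as $\bM^{[1,1]} = \bA^{\top}\bA$, $\bM^{[1,2]} = \bA^{\top}\bB$, $\bM^{[2,1]} = \bB^{\top}\bA$, and $\bM^{[2,2]} = \bB^{\top}\bB$. Then
\[
\bM^{[2,2]} - \bM^{[2,1]}\bM^{[1,1]^{\dagger}}\bM^{[1,2]} = \bB^{\top}\bigl(\bI_r - \bA(\bA^{\top}\bA)^{\dagger}\bA^{\top}\bigr)\bB,
\]
so it would suffice to prove the purely linear-algebraic identity $\bI_r - \bA(\bA^{\top}\bA)^{\dagger}\bA^{\top} = \bP$. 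Once this is known, the symmetric idempotent property of $\bP$ gives $\bB^{\top}\bP\bB = (\bP\bB)^{\top}(\bP\bB)$, whose $(i,j)$ entry is exactly $\langle \bP\bb_i, \bP\bb_j \rangle$, delivering the claim.

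The core step is therefore to show that $\bA(\bA^{\top}\bA)^{\dagger}\bA^{\top}$ is the orthogonal projection $\bI_r - \bP$ onto the column span of $\bA$. When $\bA$ has full column rank, $\bA^{\top}\bA$ is invertible, the pseudoinverse disappears, and this is the textbook ``hat matrix'' identity. In the rank-deficient case, which is the main (but not serious) obstacle, I would verify three properties directly from the defining identities of the pseudoinverse: symmetry (immediate from symmetry of $(\bA^{\top}\bA)^{\dagger}$); idempotence, using the pseudoinverse identity $(\bA^{\top}\bA)^{\dagger}(\bA^{\top}\bA)(\bA^{\top}\bA)^{\dagger} = (\bA^{\top}\bA)^{\dagger}$; and that the map acts as the identity on $\mathrm{colspan}(\bA)$ while annihilating its orthogonal complement. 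For the last point, note that if $\bA^{\top}\bx = \bm 0$ then trivially $\bA(\bA^{\top}\bA)^{\dagger}\bA^{\top}\bx = \bm 0$, while for $\bx \in \mathrm{colspan}(\bA)$ we have
\[
\bA^{\top}\bigl(\bA(\bA^{\top}\bA)^{\dagger}\bA^{\top}\bA - \bA\bigr) = (\bA^{\top}\bA)(\bA^{\top}\bA)^{\dagger}(\bA^{\top}\bA) - \bA^{\top}\bA = \bm 0
\]
by another pseudoinverse identity, and then the columns of the difference $\bA(\bA^{\top}\bA)^{\dagger}\bA^{\top}\bA - \bA$ lie simultaneously in $\mathrm{colspan}(\bA)$ and in its orthogonal complement $\ker(\bA^{\top})$, hence vanish. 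Alternatively, the singular value decomposition $\bA = \bU\bSigma\bV^{\top}$ makes the computation fully transparent and gives the identity directly.

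With the identification $\bI_r - \bA(\bA^{\top}\bA)^{\dagger}\bA^{\top} = \bP$ established, the proposition follows by the one-line calculation already indicated: the Schur complement equals $\bB^{\top}\bP\bB = (\bP\bB)^{\top}(\bP\bB) = \Gram(\bP\bb_1, \dots, \bP\bb_n)$. I do not expect any further subtleties beyond the careful handling of the pseudoinverse above.
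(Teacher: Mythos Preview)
Your proposal is correct and follows essentially the same approach as the paper: set up $\bA,\bB$ as column matrices, write the Schur complement as $\bB^{\top}(\bI_r - \bA(\bA^{\top}\bA)^{\dagger}\bA^{\top})\bB$, and identify $\bA(\bA^{\top}\bA)^{\dagger}\bA^{\top}$ with the projection $\bI_r - \bP$ onto $\mathrm{colspan}(\bA)$. The only difference is that the paper simply cites this projection formula as known, whereas you supply a verification via the pseudoinverse axioms (or SVD) in the rank-deficient case.
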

\noindent
In words, the result says that the Schur complement in a Gram matrix merely keeps track of the effect of ``projecting away'' one subset of the Gram vectors from the remaining ones.
\begin{proof}
    Let $\bA$ have the $\ba_1, \dots, \ba_m$ as its columns and $\bB$ have the $\bb_1, \dots, \bb_n$ as its columns.
    Then,
    \begin{equation}
        \bM = \left[\begin{array}{cc} \bM^{[1, 1]} & \bM^{[1, 2]} \\ \bM^{[2, 1]} & \bM^{[2, 2]} \end{array}\right] = \left[\begin{array}{cc} \bA^{\top}\bA & \bA^{\top}\bB \\ \bB^{\top}\bA & \bB^{\top}\bB \end{array}\right].
    \end{equation}
    So, the Schur complement is
    \begin{align}
        \bM^{[2, 2]} - \bM^{[2, 1]}\bM^{[1, 1]^{\dagger}}\bM^{[1, 2]}
        &= \bB^{\top} \bB - \bB^{\top}\bA (\bA^{\top}\bA)^{\dagger} \bA^{\top} \bB \nonumber
        \intertext{and, recognizing the formula for the projection $\bm I - \bP = \bA (\bA^{\top}\bA)^{\dagger} \bA^{\top}$ to the span of the $\ba_1, \dots, \ba_m$,}
        &= \bB^{\top} \bB - \bB^{\top} (\bm I - \bP) \bB \nonumber \\
        &= \bB^{\top} \bP \bB \nonumber \\
        &= (\bP\bB)^{\top}(\bP\bB),
    \end{align}
    completing the proof.
\end{proof}

\begin{remark}[Other applications]
    Proposition~\ref{prop:gramian-schur-complement} is a useful intuitive guide to the geometric meaning of the Schur complement; we are surprised that it does not seem to be widely known.
    We highlight two connections of Proposition~\ref{prop:gramian-schur-complement} to other topics.
    First, taking Schur complements with respect to $1 \times 1$ diagonal blocks in a Gram matrix in some sequence describes precisely the course of the Gram-Schmidt orthogonalization procedure applied to the Gram vectors.
    Second, for a Gram matrix, the determinant equals the squared volume (Lebesgue measure) of the parallelopiped spanned by the Gram vectors.
    Thus let us write $\vol(\ba_1, \dots, \ba_m) \colonequals \sqrt{\det(\Gram(\ba_1, \dots, \ba_m))}$.
    Consider Schur's determinant identity,
    \begin{equation}
        \det(\bM) = \det(\bM^{[1, 1]}) \det(\bM^{[2, 2]} - \bM^{[2, 1]}\bM^{[1, 1]^{\dagger}}\bM^{[1, 2]}),
    \end{equation}
    in this context.
    It may be rewritten
    \begin{equation}
        \vol(\ba_1, \dots, \ba_m, \bb_1, \dots, \bb_n) = \vol(\ba_1, \dots, \ba_m) \cdot \vol(\bP\bb_1, \dots, \bP \bb_n),
    \end{equation}
    which we recognize as a generalized ``base $\times$ height'' formula for the volume of a parallelopiped.
\end{remark}

We now proceed to Laurent's proposal.
The idea is to compute Schur complements in $\bY^{(n)}$ with respect to the blocks indexed by $\binom{[n]}{d}$, in the order $d = 0, 1, \dots, d_{\max}$.
Laurent observed that for the first few $d$, each successive block belongs to the Johnson association scheme, and thus its positivity may be verified using the Bose-Mesner algebra tools mentioned in our introduction and its inverse calculated (at least in principle) in closed form.

Let us explain why these observations hold with reference to the tools we have developed so far.
We have seen that $\bY^{(n)}$ is the Gram matrix of the polynomials
\begin{equation}
    p_S(\bx) \colonequals \sum_{d = 0}^{|S|} \sigma_d h_{S, d}(\langle \bv_1, \bz \rangle, \dots, \langle \bv_n, \bz \rangle)
\end{equation}
under the apolar inner product.
Recall that, for any $S$, we have $h_{S, |S|} = h_S$, and the $h_S(\langle \bv_1, \bz \rangle, \dots, \langle \bv_n, \bz \rangle)$ with $|S| = d$ span $V_{\sH}^{(d)}$.
Thus, by Proposition~\ref{prop:gramian-schur-complement}, after $k$ steps of the procedure Laurent proposes, the matrix we are left with will have as its Gram vectors
\begin{equation}
    \sum_{d = k}^{|S|} \sigma_d h_{S, d}(\langle \bv_1, \bz \rangle, \dots, \langle \bv_n, \bz \rangle)
\end{equation}
over all $|S| \geq k$.
And, the block that we invert in the $(k + 1)$th step of the procedure will just be $\sigma_k^2$ times the Gram matrix of the $h_S(\langle \bv_1, \bz \rangle, \dots, \langle \bv_n, \bz \rangle)$ over all $|S| = k$.
From Definition~\ref{def:isotypic-proj} (of $h_S$ in terms of the character $\chi_{(n - k, k)}$) and Proposition~\ref{prop:two-rows-character} (describing the character $\chi_{(n - k, k)}$) it is then clear that this Gram matrix will always belong to the Johnson association scheme, since it is invariant under the action of $S_n$ on $\binom{[n]}{k}$.

\subsection{Proof of Theorem~\ref{thm:laurent-evals}}

Returning to our main task, we proceed to the derivation of the eigenvalues of $\bY^{(n)}$.

\begin{proof}[Proof of Theorem~\ref{thm:laurent-evals}]
    Let $\bA^{(d)} \in \RR^{\dim(V^{(d)}_{\sH}) \times \binom{[n]}{\leq d_{\max}}}$ have an isometric (under the apolar inner product) embedding of the $h_{S, d}$ as its columns.
    Then, the expression in \eqref{eq:tEE-prediction-decomp-laurent} says that
    \begin{equation}
        \bY^{(n)} = \sum_{d = 0}^{d_{\max}} \sigma_d^2 \bA^{(d)^{\top}}\bA^{(d)}.
    \end{equation}
    Define the matrix
    \begin{equation}
        \bA \colonequals \left[\begin{array}{c} \sigma_0 \bA^{(0)} \\ \vdots \\ \sigma_{d_{\max}} \bA^{(d_{\max})} \end{array}\right].
    \end{equation}
    Then, $\bY^{(n)} = \bA^{\top} \bA$, so the non-zero eigenvalues of $\bY^{(n)}$ are equal to those of $\bA\bA^{\top}$.
    We claim that $\bA\bA^{\top}$ is actually a diagonal matrix with diagonal entries having multiplicities $\dim(V_{\sH}^{(d)})$, so that, for some $\lambda_{n, 0}, \dots, \lambda_{n, d_{\max}}$ we have
    \begin{equation}
        \label{eq:block-diag}
        \bA\bA^{\top} = \left[\begin{array}{cccc} \lambda_{n, 0} \bm I_{\dim(V_{\sH}^{(0)})} & & & \\ & \lambda_{n, 1} \bm I_{\dim(V_{\sH}^{(1)})} & & \\ & & \ddots & \\ & & & \lambda_{n, d_{\max}} \bm I_{\dim(V_{\sH}^{(d_{\max})})} \end{array}\right].
    \end{equation}

    Consider first the diagonal blocks of $\bA\bA^{\top}$.
    By Schur's lemma, whenever $d^{\prime} \geq d$ and $d$ and $d^{\prime}$ have the same parity, then we have that $\{h_{S, d}\}_{S \in \binom{[n]}{d^{\prime}}} \subset V_{\sH}^{(d)}$ forms a \emph{tight frame} in $V_{\sH}^{(d)}$; that is, $\sum_S h_{S, d}h_{S, d}^{\top}$ is a multiple of the identity, where again the vectorization is interpreted to be isometric with respect to the apolar inner product, and where we note that the apolar inner product is invariant under the action of $S_n$.
    This is because the $h_{S, d}$ form a union of orbits under the action of $S_n$ on $V_{\sH}^{(d)}$, which is an irrep of $S_n$.

    Let $f_{d^{\prime}, d}$ denote the associated \emph{frame constant}, that is, the constant so that, for all $p \in V_{\sH}^{(d)}$, we have
    \begin{equation}
        \label{eq:f-frame-eqn}
        \sum_{S \in \binom{[n]}{d^{\prime}}} \langle h_{S, d}, p \rangle_{\circ} \, h_{S, d} = f_{d^{\prime}, d} \, p.
    \end{equation}
    Let $f_{d^{\prime}, d} = 0$ if $d > d^{\prime}$ or $d^{\prime}$ and $d$ have different parity.
    We then have
    \begin{equation}
        \bA^{(d)}\bA^{(d)^{\top}} = \sum_{S \in \binom{[n]}{\leq d_{\max}}} h_{S, d}h_{S, d}^{\top} = \left(\sum_{d^{\prime} = d}^{d_{\max}} f_{d^{\prime}, d}\right) \bm I_{\dim(V^{(d)}_{\sH})}.
    \end{equation}

    Next, consider the off-diagonal blocks of $\bA\bA^{\top}$, say the block indexed by some $d_1 \neq d_2$.
    We have $\bA^{(d_1)}\bA^{(d_2)^{\top}} = \sum_S h_{S, d_1}h_{S, d_2}^{\top}$.
    We may view this as a linear operator mapping $V_{\sH}^{(d_2)} \to V_{\sH}^{(d_1)}$.
    As before, this operator commutes with the actions of $S_n$ on these two spaces.
    However, since these are now two non-isomorphic irreps of $S_n$, by Schur's lemma we must have $\bA^{(d_1)}\bA^{(d_2)^{\top}} = \bm 0$.

    Thus the diagonal form proposed in \eqref{eq:block-diag} holds with eigenvalues $\lambda_{n, 0}, \dots, \lambda_{n, d_{\max}} > 0$ given by
    \begin{equation}
        \lambda_{n, d} = \sigma_d^2 \sum_{d^{\prime} = d}^{d_{\max}} f_{d^{\prime}, d} \text{ with multiplicity } \dim(V^{(d)}_{\sH}) = \binom{n}{d} - \binom{n}{d - 1}.
    \end{equation}
    In particular, $\bA\bA^{\top} \succ \bm 0$ strictly, so these are also precisely the positive eigenvalues of $\bY^{(n)}$, and the multiplicity of the zero eigenvalue of $\bY^{(n)}$ is
    \begin{equation}
        \binom{n}{\leq d_{\max}} - \sum_{d = 0}^{d_{\max}} \left(\binom{n}{d} - \binom{n}{d - 1}\right) = \binom{n}{\leq d_{\max}} - \binom{n}{d_{\max}} = \binom{n}{\leq d_{\max} - 1},
    \end{equation}
    as claimed.

    We now turn to the explicit computation of the eigenvalues.
    Let us write
    \begin{equation}
        \eta_{d^{\prime}, d}^2 \colonequals \| h_{S, d} \|_{\circ}^2 \text{ for any } S \in \binom{[n]}{d^{\prime}},
    \end{equation}
    noting that these numbers are all equal by symmetry.
    Then, the frame constants from \eqref{eq:f-frame-eqn} are
    \begin{equation}
        f_{d^{\prime}, d} = \frac{\binom{n}{d^{\prime}}}{\dim(V_{\sH}^{(d)})}\eta_{d^{\prime}, d}^2 = \frac{\binom{n}{d^{\prime}}}{\binom{n}{d} - \binom{n}{d - 1}}\eta_{d^{\prime}, d}^2.
    \end{equation}

    It remains to compute the $\eta_{d^{\prime}, d}$, which will yield the $f_{d^{\prime}, d}$ and then in turn the eigenvalues $\lambda_{n, d}$.
    We first note that, by our earlier computation in Lemma~\ref{lem:block-diag-laurent}, for any given $S \in \binom{[n]}{d}$,
    \begin{equation}
        \eta_{d, d}^2 = \|h_S\|_{\circ}^2 = \frac{\binom{n}{d} - \binom{n}{d - 1}}{\binom{n}{d}} \frac{1}{d!} \left(\frac{n}{n - 1}\right)^d.
    \end{equation}
    Therefore,
    \begin{equation}
        f_{d, d} = \frac{\binom{n}{d}}{\binom{n}{d} - \binom{n}{d - 1}} \eta_{d, d}^2 = \frac{1}{d!}\left(\frac{n}{n - 1}\right)^d.
    \end{equation}

    To compute the $\eta_{d^{\prime}, d}$ with $d^{\prime} > d$, we use that $\tEE$ itself can be used to compute the following inner products, by Corollary~\ref{cor:gram-mx-laurent}:
    \begin{equation}
        \tEE[\bx^S h_T(\bx)] = \sigma_d^2 \langle h_{S, d}(\bx), h_T(\bx) \rangle_{\circ}.
    \end{equation}
    Using that the $\{h_T(\bx)\}_{T \in \binom{[n]}{d}}$ form a tight frame with frame constant $f_{d, d}$, we have
    \begin{align}
      \eta_{d^{\prime}, d}^2
      &= \|h_{S, d}\|_{\circ}^2 \nonumber \\
      &= \frac{1}{f_{d, d}} \sum_{T \in \binom{[n]}{d}} \langle h_{S, d}(\bx), h_T(\bx) \rangle_{\circ}^2 \nonumber \\
      &= \frac{1}{\sigma_d^4 f_{d, d}} \sum_{T \in \binom{[n]}{d}} (\tEE[\bx^S h_T(\bx)])^2.
    \end{align}

    We next expand these pseudoexpectations directly:
    \begin{align}
      \tEE[\bx^S h_T(\bx)]
      &= \frac{\binom{n}{d} - \binom{n}{d - 1}}{n!} \sum_{\pi \in S_n} \chi_{(n - d, d)}(\pi) \bx^{S + \pi(T)} \nonumber \\
      &= \frac{\binom{n}{d} - \binom{n}{d - 1}}{n!} \sum_{\pi \in S_n} \chi_{(n - d, d)}(\pi) a_{d + d^{\prime} - 2|S \cap \pi(T)|} \nonumber \\
      &= \frac{\binom{n}{d} - \binom{n}{d - 1}}{n!} \sum_{k = 0}^d a_{d + d^{\prime} - 2k} \sum_{\substack{\pi \in S_n \\ |S \cap \pi(T)| = k}} \chi_{(n - d, d)}(\pi) \nonumber
      \intertext{Suppose now that $|S \cap T| = \ell$.
      Then, by Lemma~\ref{lem:restricted-char-sums} we have}
      &= \frac{\binom{n}{d} - \binom{n}{d - 1}}{\binom{n}{\ell, d - \ell, d^{\prime} - \ell, n - d - d^{\prime} + \ell}} \binom{n - 2d}{d^{\prime} - d} \binom{d}{\ell}(-1)^{\ell} \sum_{k = 0}^d \binom{d}{k}(-1)^ka_{d + d^{\prime} - 2k} \nonumber
        \intertext{The remaining sum is one we evaluated in the course of our proof of Proposition~\ref{prop:laurent-hS-norm} using finite differences. Substituting that result here then gives}
      &= \frac{\binom{n}{d} - \binom{n}{d - 1}}{\binom{n}{\ell, d - \ell, d^{\prime} - \ell, n - d - d^{\prime} + \ell}} \binom{n - 2d}{d^{\prime} - d} \binom{d}{\ell} \prod_{i = 0}^{d - 1} \frac{n - 2i}{n - d^{\prime} + d - 2i - 1} \cdot a_{d^{\prime} - d}.
    \end{align}

    Substituting this into the summation that occurs in our expression for $\eta_{d, d^{\prime}}$, we then find
    \begin{align}
      &\sum_{T \in \binom{[n]}{d}} (\tEE[\bx^S h_T(\bx)])^2 \nonumber \\
      &= \sum_{\ell = 0}^{d} \binom{d^{\prime}}{\ell} \binom{n - d^{\prime}}{d - \ell} \left(\frac{\binom{n}{d} - \binom{n}{d - 1}}{\binom{n}{\ell, d - \ell, d^{\prime} - \ell, n - d - d^{\prime} + \ell}} \binom{n - 2d}{d^{\prime} - d} \binom{d}{\ell} \prod_{i = 0}^{d - 1} \frac{n - 2i}{n - d^{\prime} + d - 2i - 1} \cdot a_{d^{\prime} - d}\right)^2 \nonumber \\
      &= a_{d^{\prime} - d}^2\left(\left(\binom{n}{d} - \binom{n}{d - 1}\right)\binom{n - 2d}{d^{\prime} - d}\prod_{i = 0}^{d - 1} \frac{n - 2i}{n - d^{\prime} + d - 2i - 1}\right)^2 \nonumber \\
      &\hspace{2.5cm} \sum_{\ell = 0}^d \binom{d^{\prime}}{\ell} \binom{n - d^{\prime}}{d - \ell} \frac{\binom{d}{\ell}^2}{\binom{n}{\ell, d - \ell, d^{\prime} - \ell, n - d - d^{\prime} + \ell}^2} \nonumber \\
      &= a_{d^{\prime} - d}^2\left(\left(\binom{n}{d} - \binom{n}{d - 1}\right)\binom{n - 2d}{d^{\prime} - d}\prod_{i = 0}^{d - 1} \frac{n - 2i}{n - d^{\prime} + d - 2i - 1}\right)^2 \nonumber \\
      &\hspace{2.5cm} \frac{d!^2 d^{\prime}! (n - d^{\prime})!(n - d - d^{\prime})! (d^{\prime} - d)!}{n!^2} \sum_{\ell = 0}^d \binom{d^{\prime} - \ell}{d^{\prime} - d} \binom{n - d - d^{\prime} + \ell}{n - d - d^{\prime}} \nonumber
        \intertext{and the remaining sum evaluates by the Chu-Vandermonde identity to}
      &= a_{d^{\prime} - d}^2\left(\left(\binom{n}{d} - \binom{n}{d - 1}\right)\binom{n - 2d}{d^{\prime} - d}\prod_{i = 0}^{d - 1} \frac{n - 2i}{n - d^{\prime} + d - 2i - 1}\right)^2 \nonumber \\
      &\hspace{2.5cm} \frac{d!^2 d^{\prime}! (n - d^{\prime})!(n - d - d^{\prime})! (d^{\prime} - d)!}{n!^2} \binom{n - d + 1}{d}.
    \end{align}

    Having reached this expression, we may substitute for $\eta_{d^{\prime}, d}^2$ and find many cancellations, obtaining
    \begin{align}
      \eta_{d^{\prime}, d}^2
      &= \frac{1}{\sigma_d^4 f_{d, d}} \sum_{T \in \binom{[n]}{d}} (\tEE[\bx^S h_T(\bx)])^2 \nonumber \\
      &= a_{d^{\prime} - d}^2 \left(\frac{n}{n - 1}\right)^d \left(\prod_{i = 0}^{d - 1} \frac{n - 2i - 1}{n - 2i - 1 - d^{\prime} + d}\right)^2 \left(\binom{n}{d} - \binom{n}{d - 1}\right)^2 \nonumber \\
    &\hspace{1cm} \frac{d! d^{\prime}! (n - d^{\prime})! (n - 2d)!^2}{n!^2 (n - d - d^{\prime})! (d^{\prime} - d)!} \binom{n - d + 1}{d}.
    \end{align}
    Then we may again substitute for $\lambda_{n, d}$ and find more cancellations, obtaining
    \begin{align}
      \lambda_{n, d}
      &= \sigma_d^2 \sum_{d^{\prime} = d}^{d_{\max}} f_{d^{\prime}, d} \nonumber \\
      &= \sigma_d^2 \sum_{d^{\prime} = d}^{d_{\max}} \frac{\binom{n}{d^{\prime}}}{\binom{n}{d} - \binom{n}{d - 1}} \eta_{d^{\prime}, d}^2 \nonumber \\
      &= n! \sum_{d^{\prime} = d}^{d_{\max}} \frac{a_{d^{\prime} - d}^2}{(n - d - d^{\prime})!(d^{\prime} - d)!} \prod_{i = 0}^{d - 1} \frac{1}{(n - 2i - 1 - d^{\prime} + d)^2}.
    \end{align}

    The formula for $\lambda_{n, 0}$ then follows immediately.
    To obtain the recursion, we compute
    \begin{align}
    \lambda_{n + 2, d + 1}
    &= (n + 2)! \sum_{d^{\prime} = d + 1}^{(n + 1) / 2} \frac{a_{n + 2, d^{\prime} - d - 1}^2}{(n - d - d^{\prime} + 1)!(d^{\prime} - d - 1)!} \prod_{i = 0}^{d} \frac{1}{(n - 2i + 2 - d^{\prime} + d)^2} \nonumber \\
    &= (n + 2)! \sum_{d^{\prime} = d}^{(n - 1) / 2} \frac{a_{n + 2, d^{\prime} - d}^2}{(n - d - d^{\prime})!(d^{\prime} - d)!} \prod_{i = 0}^{d} \frac{1}{(n - 2i + 1 - d^{\prime} + d)^2} \nonumber \\
    &= (n + 2)! \sum_{d^{\prime} = d}^{(n - 1) / 2} \frac{1}{(n + 1-d^{\prime}+ d)^2} \frac{a_{n + 2, d^{\prime} - d}^2}{(n - d - d^{\prime})!(d^{\prime} - d)!} \prod_{i = 0}^{d - 1} \frac{1}{(n - 2i - 1 - d^{\prime} + d)^2} \nonumber
    \intertext{and noting that $a_{n + 2, 2k} = \frac{n - 2k + 1}{n + 1}a_{n, 2k}$, we find}
    &= \frac{(n + 2)!}{(n + 1)^2} \sum_{d^{\prime} = d}^{(n - 1) / 2} \frac{a_{n, d^{\prime} - d}^2}{(n - d - d^{\prime})!(d^{\prime} - d)!} \prod_{i = 0}^{d - 1} \frac{1}{(n - 2i - 1 - d^{\prime} + d)^2} \nonumber \\
    &= \frac{n + 2}{n + 1}\lambda_{n, d},
    \end{align}
    completing the proof.
\end{proof}

\section*{Acknowledgments}
\addcontentsline{toc}{section}{Acknowledgments}

We thank Jess Banks for several helpful discussions about our representation-theoretic arguments.

\addcontentsline{toc}{section}{References}
\bibliographystyle{alpha}
\bibliography{main}

\appendix

\section{Character Sums: Proof of Lemma~\ref{lem:restricted-char-sums}}
\label{app:class-functions}

\begin{definition}
For $\pi \in S_n$, $0 \leq a, b \leq n$, and $0 \leq k, \ell \leq a \wedge b$, we define
\begin{align}
    f_{a, k}(\pi) &\colonequals \#\left\{A \in \binom{[n]}{a}: |\pi(A) \cap A| = k \right\}, \\
    g_{a, b, k, \ell}(\pi) &\colonequals \#\left\{A \in \binom{[n]}{a}, B \in \binom{[n]}{b}: |A \cap B| = k, |\pi(A) \cap B| = \ell\right\}.
\end{align}
\end{definition}

We will ultimately be interested in inner products with the $g_{a, b, k, \ell}$, but the following shows that these reduce to linear combinations of the $f_{a, k}$.
\begin{proposition}
    \label{prop:class-fn-g-to-f}
    For all $0 \leq k, \ell \leq a \wedge b$,
    \begin{equation}
        g_{b, a, k, \ell} = g_{a, b, k, \ell} = g_{a, b, \ell, k} = \sum_{j = 0}^a \left(\sum_{i = 0}^j \binom{j}{i} \binom{a - j}{k - i} \binom{a - j}{\ell - i}\binom{n - 2a + j}{b - k - \ell + i}\right)f_{a, j}
    \end{equation}
\end{proposition}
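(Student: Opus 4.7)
The plan is to establish the explicit formula by a direct double-counting argument, decomposing $[n]$ according to how $\pi$ interacts with $A$, and then deduce the two symmetry statements from the resulting expression together with the class-function property of $f_{a,j}$.

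First I fix $A \in \binom{[n]}{a}$ with $|\pi(A) \cap A| = j$ and partition $[n]$ into the four disjoint regions
\[ S_1 \colonequals A \cap \pi(A), \quad S_2 \colonequals A \setminus \pi(A), \quad S_3 \colonequals \pi(A) \setminus A, \quad S_4 \colonequals [n] \setminus (A \cup \pi(A)), \]
of respective sizes $j, a-j, a-j, n - 2a + j$. For any $B \in \binom{[n]}{b}$ I let $i \colonequals |B \cap S_1|$. Since $A = S_1 \sqcup S_2$, the constraint $|A \cap B| = k$ forces $|B \cap S_2| = k - i$; since $\pi(A) = S_1 \sqcup S_3$, the constraint $|\pi(A) \cap B| = \ell$ forces $|B \cap S_3| = \ell - i$; and then $|B \cap S_4| = b - k - \ell + i$. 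Independent selections within the four regions contribute the factor $\binom{j}{i}\binom{a-j}{k-i}\binom{a-j}{\ell-i}\binom{n - 2a + j}{b - k - \ell + i}$. Summing over $i$ gives the number of admissible $B$ for any such $A$; this depends on $A$ only through $j = |A \cap \pi(A)|$, so summing over $A$ produces exactly $f_{a,j}(\pi)$ times the inner sum, and then summing over $j$ yields the stated identity. Terms that should be excluded by the integer ranges of $i$ are automatically killed by the standard convention $\binom{m}{r} = 0$ for $r < 0$ or $r > m$, which is why the outer summation may be written simply as $i = 0, \dots, j$.

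The symmetry $g_{a,b,k,\ell} = g_{a,b,\ell,k}$ is then immediate from the derived formula: the coefficient of $f_{a,j}(\pi)$ is visibly invariant under $k \leftrightarrow \ell$, since $\binom{a-j}{k-i}\binom{a-j}{\ell-i}$ and $\binom{n - 2a + j}{b - k - \ell + i}$ are both symmetric in $k, \ell$. For the other symmetry I use the bijection $(A,B) \mapsto (B,A)$, which identifies the pairs counted by $g_{b,a,k,\ell}(\pi)$ with pairs $(A',B')$ of sizes $a, b$ satisfying $|A' \cap B'| = k$ and $|\pi(B') \cap A'| = \ell$; rewriting $|\pi(B') \cap A'| = |B' \cap \pi^{-1}(A')|$ identifies this count with $g_{a,b,k,\ell}(\pi^{-1})$. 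The just-established formula exhibits $g_{a,b,k,\ell}$ as an $\RR$-linear combination of the class functions $f_{a,j}$ (each $f_{a,j}$ being a class function because $|\pi(A) \cap A|$ is preserved by conjugation), so $g_{a,b,k,\ell}$ is itself a class function, and in particular takes the same value on $\pi$ and $\pi^{-1}$.

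The only mildly delicate step is verifying that the bookkeeping of the four-region decomposition accounts for every admissible $B$ exactly once, including degenerate cases where one of the $S_r$ is empty or where $k, \ell$ approach their extreme values $0$ or $a \wedge b$; this is the main point that needs to be written out carefully, but beyond this the argument is purely combinatorial and requires no identity transformation of binomial coefficients. I do not expect to need the Chu-Vandermonde or any summation identity here — the binomial product arises directly as a count of independent choices, and the outer symmetries reduce to a relabeling of sets plus the class-function remark.
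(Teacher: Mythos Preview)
Your proposal is correct and follows essentially the same approach as the paper: both fix $A$ with $|A\cap\pi(A)|=j$, partition $[n]$ into the four regions $A\cap\pi(A)$, $A\setminus\pi(A)$, $\pi(A)\setminus A$, and the complement, and count admissible $B$ by choosing $i$, $k-i$, $\ell-i$, $b-k-\ell+i$ elements from each. The only cosmetic difference is ordering: the paper establishes $g_{b,a,k,\ell}=g_{a,b,k,\ell}$ first via $|\pi(A)\cap B|=|A\cap\pi^{-1}(B)|$ and the direct observation that $g$ is a class function, then derives the formula, whereas you derive the formula first and use it (through the $f_{a,j}$ being class functions) to reach the same conclusion.
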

\begin{proof}
    The first equality holds since $|\pi(A) \cap B| = |A \cap \pi^{-1}(B)|$, and so since inversion does not change the conjugacy class of $\pi$, we have $g_{a, b, k, \ell}(\pi) = g_{b, a, k, \ell}(\pi^{-1}) = g_{b, a, k, \ell}(\pi)$.

    Suppose $A \in \binom{[n]}{a}$ with $|A \cap \pi(A)| = j$.
    Then, $B \in \binom{[n]}{b}$ with $|A \cap B| = k$ and $|\pi(A) \cap B| = \ell$ consists of some $0 \leq i \leq j$ elements of $A \cap \pi(A)$, $k - i$ elements of $A \setminus \pi(A)$, $\ell - i$ elements of $\pi(A) \setminus A$, and $b - i - (k - i) - (\ell - i) = b - k - \ell + i$ elements of $[n] \setminus A \setminus \pi(A)$.
    Thus,
    \begin{align}
      &g_{a, b, k, \ell}(\pi) \nonumber \\
    &= \sum_{A \in \binom{[n]}{a}} \#\left\{B \in \binom{[n]}{b}: |A \cap B| = k, |\pi(A) \cap B| = \ell\right\} \nonumber \\
    &= \sum_{A \in \binom{[n]}{a}} \sum_{i = 0}^{|A \cap \pi(A)|} \binom{|A \cap \pi(A)|}{i} \binom{a - |A \cap \pi(A)|}{k - i} \binom{a - |A \cap \pi(A)|}{\ell - i} \binom{n - 2a + |A \cap \pi(A)|}{b - k - \ell + i} \nonumber \\
    &= \sum_{j = 0}^a \#\left\{A \in \binom{[n]}{a}: |A \cap \pi(A)| = j\right\} \sum_{i = 0}^{j} \binom{j}{i} \binom{a - j}{k - i} \binom{a - j}{\ell - i} \binom{n - 2a + j}{b - k - \ell + i},
    \end{align}
    and the remaining cardinality is by definition $f_{a, j}(\pi)$.
\end{proof}

The following is our key combinatorial lemma, computing the inner product of $\chi_{(n - d, d)}$ with the $g_{a, b, k, \ell}$ so long as one of $a$ and $b$ is at most $d$.
\begin{proposition}
    \label{prop:class-function-ips}
    For all $0 \leq a \leq d \leq n / 2$, $a \leq b \leq n$, and $0 \leq k, \ell \leq a \wedge b$,
    \begin{equation}
        \frac{1}{n!} \sum_{\pi \in S_n} \chi_{(n - d, d)}(\pi) g_{a, b, k, \ell}(\pi) = \left\{\begin{array}{ll} 0 & \text{if } a < d, \\ (-1)^{k + \ell} \binom{d}{k}\binom{d}{\ell}\binom{n - 2d}{b - d} & \text{if } a = d. \end{array}\right.
    \end{equation}
\end{proposition}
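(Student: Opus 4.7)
The plan is to recast the sum as an eigenvalue computation on the permutation representation $V_a \colonequals \RR^{\binom{[n]}{a}}$, on which $S_n$ acts by $\pi \cdot e_A = e_{\pi(A)}$. Consider the linear operator $\bN = \bN_{b, k, \ell}$ on $V_a$ whose matrix entries
\[ (\bN)_{A', A} \colonequals \#\big\{B \in \binom{[n]}{b} : |A \cap B| = k,\ |A' \cap B| = \ell\big\} \]
depend only on $|A \cap A'|$; consequently $\bN$ commutes with the $S_n$-action and lies in the Bose-Mesner algebra of the Johnson scheme $J(n, a)$. Unpacking the trace and using the $k \leftrightarrow \ell$ symmetry of Proposition~\ref{prop:class-fn-g-to-f} gives $g_{a, b, k, \ell}(\pi) = \Tr(\bN \pi)$; combining with the isotypic projector formula $\bP_{(n-d,d)} = \frac{\dim W^{(n-d,d)}}{n!} \sum_\pi \chi_{(n-d,d)}(\pi)\, \pi$ on $V_a$ then yields
\[ \frac{1}{n!} \sum_\pi \chi_{(n-d,d)}(\pi)\, g_{a,b,k,\ell}(\pi) \;=\; \frac{\Tr(\bN\, \bP_{(n-d,d)})}{\dim W^{(n-d,d)}}. \]

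When $a < d$, Young's rule gives $V_a \cong \bigoplus_{i=0}^{a} W^{(n-i, i)}$ (valid since $a \leq d \leq n/2$), which contains no copy of $W^{(n-d,d)}$, so $\bP_{(n-d,d)} = \bm 0$ on $V_a$ and the sum vanishes, handling the first case. When $a = d$, there is a unique copy of $W^{(n-d,d)}$ in $V_d$, on which $\bN$ acts as a scalar $\mu$; the trace collapses to $\mu \cdot \dim W^{(n-d,d)}$ and we are reduced to showing $\mu = (-1)^{k + \ell}\binom{d}{k}\binom{d}{\ell}\binom{n - 2d}{b - d}$. To compute $\mu$ uniformly in $(b, k, \ell)$, I would package everything into a single generating function; factoring $\sum_B x^{|A \cap B|} y^{|A' \cap B|} z^{|B|}$ over the four regions of $[n]$ determined by $A, A'$ (whose sizes depend only on $m = |A \cap A'|$) gives
\[ \sum_{b, k, \ell} \bN_{b, k, \ell}\, x^k y^\ell z^{b - d} \;=\; z^{-d}(1+xz)^d(1+yz)^d(1+z)^{n-2d}\cdot \bT(r), \]
where $\bT(r) \colonequals \sum_m r^m \bM_m$ has entries $\bT(r)_{A, A'} = r^{|A \cap A'|}$ (with $\bM_m$ the Johnson scheme matrices on $V_d$) and $r \colonequals \frac{(1 + xyz)(1 + z)}{(1 + xz)(1 + yz)}$. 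The claimed answer summed the same way is $(1-x)^d(1-y)^d(1+z)^{n-2d}$, and the elementary identity $r - 1 = \frac{z(1-x)(1-y)}{(1+xz)(1+yz)}$ reduces the whole proposition to the single claim that $\bT(r)$ acts on $W^{(n-d, d)} \subset V_d$ as the scalar $(r - 1)^d$.

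The main obstacle is verifying this eigenvalue identity, which I would establish through a binomial expansion of the matrix entries. Writing $r^{|A \cap A'|} = \sum_{C \subseteq A \cap A'} (r - 1)^{|C|}$ and reorganizing yields
\[ \bT(r) \;=\; \sum_{C \subseteq [n],\, |C| \leq d} (r - 1)^{|C|}\, \bu_C \bu_C^\top, \qquad \bu_C \colonequals \sum_{A \supseteq C,\, |A| = d} e_A \in V_d. \]
A dimension count identifies $W^{(n-d,d)} \subset V_d$ with the kernel of the ``down'' map $\partial : V_d \to V_{d-1}$ defined by $\partial e_A \colonequals \sum_{C \subset A,\, |C| = d - 1} e_C$. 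For $|C| = d - 1$, $\bu_C = \partial^* e_C \in \img(\partial^*) = (\ker \partial)^\perp$; the averaging relation $\sum_{C' \supseteq C,\, |C'| = d - 1} \bu_{C'} = (d - |C|)\, \bu_C$ then propagates this orthogonality to all $|C| < d$. Consequently, only the terms with $|C| = d$ (for which $\bu_C = e_C$) act non-trivially on $W^{(n-d,d)}$; they contribute $(r - 1)^d \sum_{|C| = d} e_C e_C^\top = (r - 1)^d \cdot \bI$, which is precisely the desired identity.
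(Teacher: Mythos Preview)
Your argument is correct and takes a genuinely different route from the paper. The paper first computes $\langle \chi_{(n-d,d)}, f_{a,k}\rangle$ via an Euler-inversion recursion on the auxiliary sums $F_{a,j}$, then uses Proposition~\ref{prop:class-fn-g-to-f} to pass to $g_{a,b,k,\ell}$, and finally handles the case $a=d$ by an induction on $n$ that unwinds a multinomial finite-difference expansion term by term. Your approach instead recognizes the sum as the eigenvalue of a Johnson-scheme operator on the $W^{(n-d,d)}$-isotypic component of $V_a$; the case $a<d$ then falls out immediately from Young's rule, and for $a=d$ you package all $(b,k,\ell)$ at once into a generating function that factors against the single ``master'' matrix $\bT(r)_{A,A'}=r^{|A\cap A'|}$. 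The rank-one decomposition $\bT(r)=\sum_C (r-1)^{|C|}\bu_C\bu_C^\top$ together with $\bu_C\in\img\partial^*\subseteq (W^{(n-d,d)})^\perp$ for $|C|<d$ is a clean structural argument that replaces the paper's combinatorial induction entirely. One small remark: you only need the inclusion $W^{(n-d,d)}\subseteq\ker\partial$ (immediate from Schur, since $V_{d-1}$ contains no copy of $W^{(n-d,d)}$) rather than equality, so the ``dimension count'' you invoke is not actually required. Overall your proof is more conceptual and more in keeping with the representation-theoretic spirit of the surrounding paper, while the paper's proof is more elementary but considerably longer.
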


\begin{proof}
    We first compute the inner products with the $f_{a, k}$.
    To this end, we introduce the functions
    \begin{equation}
        \label{eq:F-f}
        F_{a, j} \colonequals \sum_{k = j}^a \binom{k}{j} f_{a, k}.
    \end{equation}
    Then, we have
    \begin{align}
    F_{a, j}(\pi)
    &= \sum_{A \in \binom{[n]}{a}} \binom{|A \cap \pi(A)|}{j} \nonumber \\
    &= \sum_{A \in \binom{[n]}{a}} \sum_{C \in \binom{A}{j}} 1\{\pi(C) \subseteq A\} \nonumber \\
    &= \sum_{C \in \binom{[n]}{j}} \sum_{B \in \binom{[n] \setminus C}{a - j}} 1\{\pi(C) \subseteq C \cup B\} \nonumber \\
    &= \sum_{C \in \binom{[n]}{j}} \binom{n - 2j + |\pi(C) \cap C|}{a - 2j + |\pi(C) \cap C|} \nonumber  \\
    &= \sum_{i = 0}^j \binom{n - 2j + i}{a - 2j + i} f_{j, i}(\pi).
    \end{align}
    On the other hand, we may invert the relation \eqref{eq:F-f} (this ``inversion of Pascal's triangle'' follows from the binomial coefficients giving the coefficients of the polynomial transformation $p(x) \mapsto p(x + 1)$, whereby the inverse gives the coefficients of the transformation $p(x) \mapsto p(x - 1)$; it is also sometimes called the \emph{Euler transform}) to obtain the closed recursion
    \begin{equation}
        f_{a, k} = \sum_{j = k}^a (-1)^{j + k} \binom{j}{k} F_{a, j} = \sum_{j = k}^a (-1)^{j + k} \binom{j}{k} \sum_{i = 0}^j \binom{n - 2j + i}{a - 2j + i} f_{j, i}.
    \end{equation}
    In particular, the only non-zero term with $j = a$ is $(-1)^{a + k} \binom{a}{k} f_{a, a}$.
    We know that
    \begin{equation}
        f_{a, a} = c_a = \sum_{d = 0}^a \chi_{(n - d, d)}.
    \end{equation}
    Thus, by induction it follows that, in the character expansion of $f_{a, k}$, $\chi_{(n - d, d)}$ appears only if $a \geq d$, and when $a = d$ it appears with coefficient $(-1)^{d + k} \binom{d}{k}$.
    Thus we have
    \begin{equation}
        \frac{1}{n!} \sum_{\pi \in S_n} \chi_{(n - d, d)}(\pi) f_{a, k}(\pi) = \left\{\begin{array}{ll} 0 & \text{if } a < d, \\ (-1)^{d + k}\binom{d}{k} & \text{if } a = d. \end{array}\right.
    \end{equation}

    The first case of our claim, with $a < d$, now follows immediately from Proposition~\ref{prop:class-fn-g-to-f}.
    For the second case, with $a = d$, we proceed by induction on $n$.
    First, making a general manipulation, again by Proposition~\ref{prop:class-fn-g-to-f} we have
    \begin{align}
        &\hspace{-1cm}\frac{1}{n!} \sum_{\pi \in S_n} \chi_{(n - d, d)}(\pi) g_{a, b, k, \ell}(\pi) \nonumber \\
        &= \sum_{j = 0}^d \sum_{i = 0}^j \binom{j}{i} \binom{d - j}{k - i} \binom{d - j}{\ell - i}\binom{n - 2d + j}{b - k - \ell + i} \frac{1}{n!} \sum_{\pi \in S_n} \chi_{(n - d, d)}(\pi) f_{d, j}(\pi) \nonumber  \\
        &= \sum_{j = 0}^d (-1)^{d + j} \binom{d}{j} \sum_{i = 0}^j \binom{j}{i} \binom{d - j}{k - i} \binom{d - j}{\ell - i}\binom{n - 2d + j}{b - k - \ell + i} \nonumber
        \intertext{We start to treat the remaining sum using that $\sum_{j = 0}^d (-1)^{j} \binom{d}{j} f(j)$ gives the $d$th finite difference of the function $f$.
    In particular, for $f$ a polynomial of degree smaller than $d$, any such sum is zero.
    Furthermore, $\sum_{j = 0}^d (-1)^j \binom{d}{j}j^d = (-1)^d d!$.
    Therefore, we may continue, always applying the differencing $\Delta$ transformation to functions of the variable $j$,}
        &= \sum_{i = 0}^d \sum_{w + x + y + z = d}\binom{d}{w, x, y, z}\nonumber \\
      &\hspace{2cm} \frac{\Delta^w j^{\underline{i}} \cdot \Delta^x(d - j)^{\underline{k - i}} \cdot \Delta^y (d - j)^{\underline{\ell - i}} \cdot \Delta^z (n - 2d + j)^{\underline{b - k - \ell + i}}\, \big|_{j = 0}}{i! (k - i)! (\ell - i)! (b - k - \ell + i)!} \label{eq:chi-g-inter}
    \end{align}
    Here, in all cases the first factor, $\Delta^w j^{\underline{i}} \, \big|_{j = 0}$, will only be nonzero when $w = i$.

    Let us now first specialize to the base case $n = 2d$.
    In this case, the last factor, $\Delta^z (n - 2d + j)^{\underline{b - k - \ell + i}}\, \big|_{j = 0}$, will likewise only be nonzero when $b - k - \ell + w = z$.
    In that case, we must have $x + y = k + \ell - 2w + d - b$.
    Since in all nonzero terms $x \leq k - w$ and $y \leq \ell - w$, and $d \leq b$, we will only have a nonzero result if $d = b, x = k - w$, and $y = \ell - w$.
    In this case, we have
    \begin{align}
      \frac{1}{n!} \sum_{\pi \in S_n} \chi_{(n - d, d)}(\pi) g_{d, d, k, \ell}(\pi)
      &= (-1)^{k + \ell} \sum_{w = 0}^d \binom{d}{w, k - w, \ell - w, d - k - \ell + w} \nonumber \\
      &= (-1)^{k + \ell}\binom{d}{k} \binom{d}{\ell},
    \end{align}
    the final step following since the remaining sum counts the number of ways to choose a subset of size $k$ and a subset of size $\ell$ from $[d]$, with $w$ being the size of the intersection.
    Thus the result holds when $n = 2d$.

    Suppose now that $n > 2d$ and the result holds for $n - 1$.
    Continuing from \eqref{eq:chi-g-inter} above and completing the computation of the differences,
    \begin{align}
        &\frac{1}{n!} \sum_{\pi \in S_n} \chi_{(n - d, d)}(\pi) g_{a, b, k, \ell}(\pi) \nonumber \\
        &= \sum_{w + x + y + z = d}(-1)^{x + y}\binom{d}{w, x, y, z} \frac{1}{(k - w)! (\ell - w)! (b - k - \ell + w)!}\nonumber \\
        &\hspace{3cm} (k - i)^{\underline{x}}(d - w - x)^{\underline{k - i - x}} (\ell - i)^{\underline{y}} (d - w - x - y)^{\underline{\ell - w - y}} (b - k - \ell + w)^{\underline{z}} \nonumber \\
        &\hspace{3cm} (n - 2d + w + x + y)^{\underline{b - k - \ell + w - z}} \nonumber \\
        &= \sum_{w + x + y + z = d}(-1)^{x + y} \binom{d}{w, x, y, z} \binom{d - w - x}{k - w - x} \binom{d - w - x - y}{\ell - w - y} \binom{n - 2d + w + x + y}{b - k - \ell + w - z} \nonumber
        \intertext{Reindexing in terms of $x^{\prime} \colonequals k - w - x, y^{\prime} = \ell - w - y, z^{\prime} = b - k - \ell + w - z$, which we note must be non-negative and satisfy $x^{\prime} + y^{\prime} + z^{\prime} = b - d$, we find}
        &= \sum_{w = 0}^d \, \, \sum_{x^{\prime} + y^{\prime} + z^{\prime} = b - d}(-1)^{x + y} \binom{d}{w, k - w - x^{\prime}, \ell - w - y^{\prime}, d - k - \ell + w + x^{\prime} + y^{\prime}} \nonumber \\
        &\hspace{1.5cm} \binom{d - k + x^{\prime}}{x^{\prime}} \binom{d - k - \ell + w + x^{\prime} + y^{\prime}}{y^{\prime}} \binom{n - 2d + k + \ell - w - x^{\prime} - y^{\prime}}{z^{\prime}}. \label{eq:chi-g-sum}
    \end{align}
    We emphasize here first that $b$ appears only in the summation bounds for the inner sum, and second that we have rewritten to leave only one occurrence of $z^{\prime}$, in the final factor.

    We group the terms of the sum according to whether $z^{\prime} = 0$ or $z^{\prime} > 0$:
    \begin{align}
        S_0(b, d, k, \ell) &\colonequals \nonumber \\
      &\hspace{-1cm}\sum_{w = 0}^d \, \,  \sum_{x^{\prime} + y^{\prime} = b - d} (-1)^{x + y} \binom{d}{w, k - w - x^{\prime}, \ell - w - y^{\prime}, d - k - \ell + w + x^{\prime} + y^{\prime}} \nonumber \\
        &\hspace{4cm}\binom{d - k + x^{\prime}}{x^{\prime}} \binom{d - k - \ell + w + x^{\prime} + y^{\prime}}{y^{\prime}}, \\
       S_1(n, b, d, k, \ell) &\colonequals \nonumber \\
      &\hspace{-1cm}\sum_{w = 0}^d \, \, \sum_{x^{\prime} + y^{\prime} + z^{\prime} = b - d - 1}(-1)^{x + y} \binom{d}{w, k - w - x^{\prime}, \ell - w - y^{\prime}, d - k - \ell + w + x^{\prime} + y^{\prime}} \nonumber \\
        &\hspace{4.8cm} \binom{d - k + x^{\prime}}{x^{\prime}} \binom{d - k - \ell + w + x^{\prime} + y^{\prime}}{y^{\prime}} \nonumber \\
        &\hspace{4.8cm} \binom{n - 2d + k + \ell - w - x - y}{z^{\prime} + 1}.
    \end{align}
    Then, the sum we are interested in, that given in \eqref{eq:chi-g-sum}, is $S(n, b, d, k, \ell) \colonequals S_0(b, d, k, \ell) + S_1(n, b, d, k, \ell)$.
    Now, applying the identity $\binom{m}{a} = \binom{m - 1}{a} + \binom{m - 1}{a - 1}$ to the last factor involving $z^{\prime}$ in $S_1$, we find that
    \begin{equation}
        S_1(n, b, d, k, \ell) = S_1(n - 1, b, d, k, \ell) + S(n - 1, b - 1, d, k, \ell).
        \label{eq:S1-rec}
    \end{equation}
    Thus we have
    \begin{align}
        S(n, b, d, k, \ell)
        &= S_0(b, d, k, \ell) + S_1(n, b, d, k, \ell) \nonumber \\
        &= S_0(b, d, k, \ell) + S_1(n - 1, b, d, k, \ell) + S(n - 1, b - 1, d, k, \ell) \tag{by \eqref{eq:S1-rec}} \\
        &= S(n - 1, b, d, k, \ell) + S(n - 1, b - 1, d, k, \ell) \nonumber
          \intertext{and by the inductive hypothesis}
        &= (-1)^{k + \ell}\binom{d}{k}\binom{d}{\ell}\left( \binom{n - 2d - 1}{b - d} + \binom{n - 2d - 1}{b - d - 1}\right) \nonumber \\
        &= (-1)^{k + \ell}\binom{d}{k}\binom{d}{\ell} \binom{n - 2d}{b - d},
    \end{align}
    completing the induction.
\end{proof}

\begin{proof}[Proof of Lemma~\ref{lem:restricted-char-sums}]
    Let us write $\ell \colonequals |A \cap B|$.
    Then, using that $\chi_{(n - d, d)}$ is a class function, we may average over conjugations,
    \begin{align}
      \sum_{\substack{\pi \in S_n \\ |\pi(A) \cap B| = k}} \chi_{(n - d, d)}(\pi)
      &= \frac{1}{n!} \sum_{\sigma \in S_n} \sum_{\substack{\pi \in S_n \\ |\sigma^{-1}\pi \sigma (A) \cap B| = k}} \chi_{(n - d, d)}(\sigma^{-1}\pi \sigma) \nonumber \\
        &= \frac{1}{n!} \sum_{\sigma \in S_n} \sum_{\substack{\pi \in S_n \\ |\pi \sigma (A) \cap \sigma(B)| = k}} \chi_{(n - d, d)}(\pi) \nonumber \\
        &= \frac{1}{\binom{n}{\ell, a - \ell, b - \ell, n - a - b + \ell}} \sum_{\substack{A^{\prime} \in \binom{[n]}{a} \\ B^{\prime} \in \binom{[n]}{b} \\ |A^{\prime} \cap B^{\prime}| = \ell}} \sum_{\substack{\pi \in S_n \\ |\pi(A^{\prime}) \cap B^{\prime}| = k}} \chi_{(n - d, d)}(\pi) \nonumber \\
        &= \frac{1}{\binom{n}{\ell, a - \ell, b - \ell, n - a - b + \ell}} \sum_{\pi \in S_n} \chi_{(n - d, d)}(\pi) g_{a, b, k, \ell}(\pi),
    \end{align}
    and the result now follows from Proposition~\ref{prop:class-function-ips} upon simplifying.
\end{proof}

\end{document}